\theoremstyle{plain}
\newtheorem{theorem}{Theorem}[section]
\newtheorem{lemma}[theorem]{Lemma}
\newtheorem{corollary}[theorem]{Corollary}
\newtheorem{proposition}[theorem]{Proposition}
\newtheorem{question}[theorem]{Question}
\theoremstyle{definition}
\newtheorem*{convention}{Convention}
\theoremstyle{remark}
\newtheorem*{remark}{Remark}
\newcommand{\calG}{\mathcal{G}}
\newcommand{\Hom}{\text{Hom}}
\DeclareMathOperator{\Tr}{Tr}
\title{Extremal $H$-colorings of graphs with fixed minimum degree}
\author{John Engbers\thanks{john.engbers@marquette.edu; Department of Mathematics, Statistics and Computer Science, Marquette University, Milwaukee, WI 53201}}
\date{\today }
\begin{document}

\maketitle

\begin{abstract}
For graphs $G$ and $H$, a homomorphism from $G$ to $H$, or $H$-coloring of $G$, is a map from the vertices of $G$ to the vertices of $H$ that preserves adjacency.  When $H$ is composed of an edge with one looped endvertex, an $H$-coloring of $G$ corresponds to an independent set in $G$.  Galvin showed that, for sufficiently large $n$, the complete bipartite graph $K_{\delta,n-\delta}$ is the $n$-vertex graph with minimum degree $\delta$ that has the largest number of independent sets.

In this paper, we begin the project of generalizing this result to arbitrary $H$.  
Writing $\hom(G,H)$ for the number of $H$-colorings of $G$, we show that for fixed $H$ and $\delta = 1$ or $\delta = 2$,
\[
\hom(G,H) \leq \max \{ \hom(K_{\delta+1},H)^{\frac{n}{\delta+1}}, \hom(K_{\delta,\delta},H)^{\frac{n}{2\delta}}, \hom(K_{\delta,n-\delta},H)\}
\]
for any $n$-vertex $G$ with minimum degree $\delta$ (for sufficiently large $n$). We also provide examples of $H$ for which the maximum is achieved by $\hom(K_{\delta+1},H)^{\frac{n}{\delta+1}}$ and other $H$ for which the maximum is achieved by $\hom(K_{\delta,\delta},H)^{\frac{n}{2\delta}}$.  
For $\delta \geq 3$ (and sufficiently large $n$), we provide a infinite family of $H$ for which $\hom(G,H) \leq \hom(K_{\delta,n-\delta},H)$ for any $n$-vertex $G$ with minimum degree $\delta$.  The results generalize to weighted $H$-colorings.

\end{abstract}

\section{Introduction and statement of results}

Let $G=(V(G),E(G))$ be a finite simple graph.  A \emph{homomorphism} from $G$ to a finite graph $H = (V(H),E(H))$ (without multi-edges but perhaps with loops) is a map from $V(G)$ to $V(H)$ that preserves edge adjacency.  We write
\[
\Hom(G,H) = \{f:V(G) \to V(H)\,\, | \,\,v \sim_G w \implies f(v) \sim_H f(w)\}
\]
for the set of all homomorphisms from $G$ to $H$, and $\hom(G,H)$ for $|\Hom(G,H)|$.  All graphs mentioned in this paper will be finite without multiple edges.  Those denoted by $G$ will always be loopless, while those denoted by $H$ may possibly have loops.  We will also assume that $H$ has no isolated vertices.

Graph homomorphisms generalize a number of important notions in graph theory.  When $H = H_{\text{ind}}$, the graph consisting of a single edge and a loop on one endvertex, elements of $\Hom(G,H_{\text{ind}})$ can be identified with the independent sets in $G$.  When $H = K_{q}$, the complete graph on $q$ vertices, elements of $\Hom(G,K_{q})$ can be identified with the proper $q$-colorings of $G$.  Motivated by this latter example, elements of $\Hom(G,H)$ are sometimes referred to as \emph{$H$-colorings of $G$}, and the vertices of $H$ are referred to as \emph{colors}.  We will utilize this terminology throughout the paper.

In statistical physics, $H$-colorings have a natural interpretation as configurations in \emph{hard-constraint spin systems}.  Here, the vertices of $G$ are thought of as sites that are occupied by particles, with the edges of $G$ representing pairs of sites that are bonded (for example by spatial proximity).  The vertices of $H$ represent the possible spins that a particle may have, and the occupation rule is that spins appearing on sites that are bonded must be adjacent in $H$.  A valid configuration of spins on $G$ is exactly an $H$-coloring of $G$.  In the language of statistical physics, independent sets are configurations in the \emph{hard-core gas model}, and proper $q$-colorings are configurations in the \emph{zero-temperature $q$-state antiferromagnetic Potts model}.  Another example comes from the \emph{Widom-Rowlinson} graph $H=H_{\text{WR}}$, the fully-looped path on three vertices.  If the endpoints of the path represent different particles and the middle vertex represents empty space, then the Widom-Rowlinson graph models the occupation of space by two mutually repelling particles.

\medskip

Fix a graph $H$.  A natural extremal question to ask is the following: for a given family of graphs $\calG$, which graphs $G$ in $\calG$ maximize $\hom(G,H)$?  If we assume that all graphs in $\calG$ have $n$ vertices, then there are several cases where this question has a trivial answer.  First, if $H = K_{q}^{\text{loop}}$, the fully looped complete graph on $q$ vertices, then every map $f:V(G) \to V(H)$ is an $H$-coloring (and so $\hom(G,K_{q}^{\text{loop}}) = q^{n}$).  Second, if the empty graph $\overline{K}_{n}$ is contained in $\calG$, then again every map $f:V(\overline{K}_n) \to V(H)$ is an $H$-coloring (and so $\hom(\overline{K}_n,H) = |V(H)|^n$).  Motivated by this second trivial case, it is interesting to consider families $\calG$ for which each $G \in \calG$ has many edges.

For the family of $n$-vertex $m$-edge graphs, this question was first posed for $H = K_{q}$ around 1986, independently, by Linial \cite{Linial} and Wilf \cite{Wilf}.  Lazebnik provided an answer for $q=2$ \cite{Lazebnik}, but for general $q$ there is still not a complete answer.  However, much progress has been made (see \cite{LohPikhurkoSudakov} and the references therein).  Recently, Cutler and Radcliffe answered this question for $H=H_{\text{ind}}$, $H=H_{\text{WR}}$, and some other small $H$ \cite{CutlerRadcliffe2,CutlerRadcliffe3}.  
A feature of the family of $n$-vertex, $m$-edge graphs emerging from the partial results mentioned is that there seems to be no uniform answer to the question, ``which $G$ in the family maximizes $\hom(G,H)$?'', with the answers depending very sensitively on the choice of $H$.


Another interesting family to consider is the family of $n$-vertex $d$-regular graphs.  Here, Kahn \cite{Kahn1} used entropy methods to show that every \emph{bipartite} graph $G$ in this family satisfies $\hom(G,H_{\text{ind}}) \leq \hom(K_{d,d},H_{\text{ind}})^{\frac{n}{2d}}$, where $K_{d,d}$ is the complete bipartite graph with $d$ vertices in each partition class.  Notice that when $2d|n$ this bound is achieved by $\frac{n}{2d} K_{d,d}$, the disjoint union of $n/2d$ copies of $K_{d,d}$.  Galvin and Tetali \cite{GalvinTetali} generalized this entropy argument, showing that for \emph{any} $H$ and any bipartite $G$ in this family, 
\begin{equation}\label{eqn-d regular bound}
\hom(G,H) \leq \hom(K_{d,d},H)^{\frac{n}{2d}}.
\end{equation}

Kahn conjectured that (\ref{eqn-d regular bound}) should hold for $H=H_{\text{ind}}$ for all (not necessarily bipartite) $G$, and Zhao \cite{Zhao} resolved this conjecture affirmatively, 
deducing the general result from the bipartite case.  Interestingly, (\ref{eqn-d regular bound}) does not hold for general $H$ when biparticity is dropped, as there are examples of $n$, $d$, and $H$ for which $\frac{n}{d+1} K_{d+1}$, the disjoint union of $n/(d+1)$ copies of the complete graph $K_{d+1}$, maximizes the number of $H$-colorings of graphs in this family.  (For example, take $H$ to be the disjoint union of two looped vertices; here $\log_2 (\hom(G,H))$ equals the number of components of $G$.)  
In \cite{GalvinHColoringRegularGraphs}, Galvin proposes a conjecture that for any $n$-vertex $d$-regular graph $G$ and any graph $H$,
\begin{equation}\label{conj-regular graphs}
\hom(G,H) \leq \max\{\hom(K_{d+1},H)^{\frac{n}{d+1}}, \hom(K_{d,d},H)^{\frac{n}{2d}}\}.
\end{equation}


When $2d(d+1) | n$, this bound is achieved by either $\frac{n}{2d} K_{d,d}$ or $\frac{n}{d+1} K_{d+1}$. While (\ref{conj-regular graphs}) holds for a large class of $H$ (see e.g. \cite{Zhao,Zhao2}) and for any fixed $H$ asymptotically in $d$ (see \cite{GalvinHColoringRegularGraphs,GalvinProperColoringRegularGraphs}), the result is actually not true for all $n$, $d$, and $H$. 
In particular, Galvin and Sernau \cite{GalvinSernau} have found an $H$ and a $G$ (for each $d \geq 5$, where the graphs $H$ and $G$ depend on $d$) so that $\hom(G,H) > \max\{\hom(K_{d+1},H)^{\frac{n}{d+1}}, \hom(K_{d,d},H)^{\frac{n}{2d}}\}.$  


It is clear, however, that  (\ref{conj-regular graphs}) holds when $d=1$, since the graph consisting of $n/2$ disjoint copies of an edge is the only $1$-regular graph on $n$ vertices.  We will prove (\ref{conj-regular graphs}) holds for $d=2$ and also characterize the cases of equality. 


\begin{theorem}\label{thm-2 regular graphs}
Let $G$ be an $n$-vertex $2$-regular graph.  Then, for any $H$,
\[
\hom(G,H) \leq \max \{\hom(C_{3},H)^{\frac{n}{3}}, \hom(C_{4},H)^{\frac{n}{4}}\}.
\]
If $H \neq K_{q}^{\text{loop}}$, the only graphs achieving equality are $G = \frac{n}{3} C_3$ (when $\hom(C_3,H)^{\frac{1}{3}} > \hom(C_4,H)^{\frac{1}{4}}$), $G = \frac{n}{4} C_4$ (when $\hom(C_3,H)^{\frac{1}{3}} < \hom(C_4,H)^{\frac{1}{4}}$), or the disjoint union of copies of $C_3$ and copies of $C_4$ (when $\hom(C_3,H)^{\frac{1}{3}} = \hom(C_4,H)^{\frac{1}{4}}$).
\end{theorem}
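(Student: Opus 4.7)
The plan is to reduce the theorem to a pointwise inequality on individual cycles.  Every $2$-regular graph on $n$ vertices is a disjoint union of cycles $C_{k_1}\sqcup\cdots\sqcup C_{k_m}$ with $k_i\ge 3$ and $\sum_i k_i=n$, and since $\hom(\cdot,H)$ is multiplicative on disjoint unions, $\hom(G,H)=\prod_i\hom(C_{k_i},H)$.  Thus the theorem reduces to
\[
\hom(C_k,H)^{1/k}\le \max\bigl\{\hom(C_3,H)^{1/3},\hom(C_4,H)^{1/4}\bigr\}
\quad\text{for every } k\ge 3;
\]
raising the $i$th instance to the $k_i$-th power and multiplying over $i$ then yields the theorem's bound.

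To prove the cycle inequality I would invoke the spectral identity $\hom(C_k,H)=\Tr(A^k)=\sum_i\lambda_i^k$, where $A$ is the adjacency matrix of $H$ with real eigenvalues $\lambda_1,\ldots,\lambda_q$.  Setting $T_k=\sum_i\lambda_i^k$ and $S_k=\sum_i|\lambda_i|^k$, I have $T_k\le S_k$ in general, $T_k=S_k$ when $k$ is even, and $T_k\ge 0$ always (since it counts closed walks).  The function $p\mapsto S_p^{1/p}$ is the $\ell^p$-norm of the fixed finite sequence $(|\lambda_i|)$ and so is non-increasing in $p$, giving for every $k\ge 4$
\[
T_k^{1/k}\le S_k^{1/k}\le S_4^{1/4}=T_4^{1/4};
\]
the case $k=3$ is immediate.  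This proves the inequality.

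For the equality characterization (assuming $H\ne K_q^{\text{loop}}$) I plan to sharpen this to a strict inequality $T_k^{1/k}<T_4^{1/4}$ for every $k\ge 5$.  Equality in $S_k^{1/k}=S_4^{1/4}$ would, by strict monotonicity of the $\ell^p$-norm on any sequence with at least two nonzero entries, force $A$ to have at most one nonzero eigenvalue, i.e., rank at most $1$; and for odd $k\ge 5$, the further equality $T_k=S_k$ would force $A$ to have no negative eigenvalues.  The critical step is then to show that a rank-$1$ symmetric $0/1$ adjacency matrix with no all-zero row must be the all-ones matrix on its vertex set: writing $A=\lambda uu^\top$ and imposing $A_{ij}\in\{0,1\}$ forces the nonzero coordinates of $u$ to share a common value, after which the no-isolated-vertex hypothesis forces $u$ to have full support, so $H=K_q^{\text{loop}}$, contradicting the assumption.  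Given this strict bound for $k\ge 5$, the three cases of the equality statement follow by direct inspection: if $\hom(C_3,H)^{1/3}>\hom(C_4,H)^{1/4}$ only $k_i=3$ is extremal (forcing $G=\tfrac{n}{3}C_3$); if $<$, only $k_i=4$ (forcing $G=\tfrac{n}{4}C_4$); and in the borderline case of equality, both $k_i\in\{3,4\}$ achieve equality while $k_i\ge 5$ is ruled out.

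The main obstacle I anticipate is the rank-$1$ classification of the adjacency matrix—not intrinsically deep, but the only point where the no-isolated-vertex hypothesis on $H$ enters and where one must carefully identify $K_q^{\text{loop}}$ as the unique graph attaining the borderline equality.  Everything else is standard spectral bookkeeping combined with monotonicity of $\ell^p$-norms.
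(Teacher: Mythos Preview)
Your proposal is correct and follows essentially the same approach as the paper: both reduce to cycles, invoke $\hom(C_k,H)=\Tr A^k=\sum_i\lambda_i^k$, and use monotonicity of the power sums (you phrase it as $\ell^p$-norm monotonicity, the paper derives the same inequality by an explicit comparison with an auxiliary quantity $b$), and both handle equality by showing that a rank-one symmetric $0/1$ matrix with no zero row must be the all-ones matrix. The only cosmetic difference is that you appeal to the standard strict-monotonicity fact for $\ell^p$-norms of sequences with at least two nonzero entries, whereas the paper proves this in place.
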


It is possible for each of the equality conditions in Theorem \ref{thm-2 regular graphs} to occur.  The first two situations arise when $H$ is a disjoint union of two looped vertices and $H=K_2$, respectively.  For the third situation, we utilize that if $G$ is connected and $H$ is the disjoint union of $H_1$ and $H_2$, then $\hom(G,H) = \hom(G,H_1) + \hom(G,H_2)$.  Letting $H$ be the disjoint union of $8$ copies of a single looped vertex and and 4 copies of $K_2$ gives $\hom(C_3,H)^{\frac{1}{3}} = \hom(C_4,H)^{\frac{1}{4}} = 2$.

\medskip

Another natural and related family to study is $\calG(n,\delta)$, the set of all $n$-vertex graphs with minimum degree $\delta$.  Our question here becomes: for a given $H$, which $G \in \calG(n,\delta)$ maximizes $\hom(G,H)$?  Since removing edges increases the number of $H$-colorings, it is tempting to believe that the answer to this question will be a graph that is $\delta$-regular (or close to $\delta$-regular).  This in fact is not the case, even for $H = H_{\text{ind}}$.  The following result appears in \cite{GalvinTwoProblems}.

\begin{theorem}\label{thm-Galvin Ind Sets}
For $\delta \geq 1$, $n \geq 8\delta^2$, and $G \in \calG(n,\delta)$, we have
\[
\hom(G,H_{{\rm ind}}) \leq \hom(K_{\delta,n-\delta},H_{{\rm ind}}),
\]
with equality only for $G = K_{\delta,n-\delta}$.
\end{theorem}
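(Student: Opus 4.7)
The plan is to argue based on the independence number $\alpha(G)$. First, observe the elementary bound $\alpha(G) \leq n - \delta$: if $I$ were independent with $|I| > n - \delta$, then $|V(G) \setminus I| < \delta$, yet every $w \in I$ must have all of its $\geq \delta$ neighbors inside $V(G) \setminus I$, a contradiction.

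In the case $\alpha(G) = n - \delta$, fix a maximum independent set $I$ and let $W = V(G) \setminus I$, so $|W| = \delta$. The same count forces each $w \in I$ to be adjacent to \emph{every} vertex of $W$, so $G$ contains $K_{\delta, n - \delta}$ as a spanning subgraph. Consequently every independent set of $G$ lies entirely in $I$ or entirely in $W$ (a mixed set would contain an edge), giving $i(G) \leq 2^{n-\delta} + 2^\delta - 1 = i(K_{\delta,n-\delta})$, with equality iff $W$ is edgeless, i.e., $G = K_{\delta,n-\delta}$.

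The harder case is $\alpha(G) \leq n - \delta - 1$, which I expect to be the main obstacle. Here I would induct on $n$ using the recursion $i(G) = i(G - v) + i(G - N[v])$ for a vertex $v$ of degree exactly $\delta$. The term $i(G - N[v]) \leq 2^{n - \delta - 1}$ trivially handles independent sets containing $v$. For $i(G - v)$: if $G - v$ still has minimum degree $\delta$, the inductive hypothesis yields $i(G - v) \leq i(K_{\delta, n-\delta-1})$, and summing gives $i(G) \leq i(K_{\delta, n - \delta})$; moreover equality in both terms would force $G - v = K_{\delta, n - \delta - 1}$ and $N(v)$ equal to its $\delta$-side, hence $G = K_{\delta, n - \delta}$, contradicting $\alpha(G) \leq n - \delta - 1$ and giving the required strict inequality.

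The remaining sub-case is when some neighbor $u$ of $v$ also has degree $\delta$, so that $G - v$ fails the hypothesis of the induction. Since $uv$ is an edge, any independent set contains at most one of $\{u, v\}$; partitioning on $I \cap \{u, v\}$, the contributions with $v \in I$ and with $u \in I$ are each bounded by $2^{n - \delta - 1}$, and the remainder $i(G - \{u, v\})$ I would bound by iterating the recursion, tracking how many low-degree vertices must be consumed before the inductive hypothesis applies. The hypothesis $n \geq 8\delta^2$ is precisely what gives enough slack for this iteration: after at most $\delta$ steps the surviving graph satisfies the induction, at a cost of factors polynomial in $\delta$ against the $2^{n - \delta}$ target. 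The delicate point is handling the possibility of many mutually adjacent minimum-degree vertices, which is where the bookkeeping becomes the main technical obstacle.
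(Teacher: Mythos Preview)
The paper does not prove this theorem directly---it is quoted from Galvin---but it does supply an independent proof as the special case $H=H_{\text{ind}}$ of Theorem~\ref{thm-ExtremalHColorings} (see Corollary~\ref{cor-special case} and Section~\ref{sec-deltageneral}). That argument is structurally quite different from yours: it fixes a maximum matching $M$, shows $|M|\geq\delta$ for large $n$, and splits on $|M|$. When $|M|=\delta$, an augmenting-path analysis forces $G$ to contain $K_{\delta,n-\delta}$ as a spanning subgraph (this parallels your $\alpha(G)=n-\delta$ case); when $|M|>\delta$, a direct greedy count (colour one endpoint of each matching edge, then its partner, then the unmatched independent set) shows $\hom(G,H_{\text{ind}})$ is strictly below the target. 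No induction on $n$ is used.

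Your handling of $\alpha(G)=n-\delta$ and of the inductive step when $G-v$ retains minimum degree $\delta$ is correct. But the remaining sub-case is not merely ``bookkeeping'': as written it cannot close. Your trichotomy on $I\cap\{u,v\}$ already gives
\[
i(G)\;\leq\; 2\cdot 2^{\,n-\delta-1}+i(G-\{u,v\})\;=\;2^{\,n-\delta}+i(G-\{u,v\}),
\]
so to reach the target $2^{\,n-\delta}+2^{\delta}-1$ you would need $i(G-\{u,v\})\leq 2^{\delta}-1$, which is impossible for a graph on $n-2$ vertices. Iterating makes this worse, not better: every further step peels off a closed neighbourhood of a low-degree vertex and contributes another additive term of order $2^{\,n-\delta-1}$, so after $k$ steps you carry roughly $k\cdot 2^{\,n-\delta-1}$ before you ever reach a graph to which the induction applies. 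The promised ``factors polynomial in $\delta$ against the $2^{n-\delta}$ target'' are in fact additive errors of size $\Theta(2^{\,n-\delta})$ per step, and no hypothesis on $n$ repairs that. To make a recursion-based proof work you would need nontrivial (inductive) bounds on the $i(G-N[v])$ terms themselves rather than the trivial $2^{\,n-\delta-1}$; that is a genuinely different argument from the one you sketched.
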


Recently, Cutler and Radcliffe \cite{CutlerRadcliffe4} have extended Theorem \ref{thm-Galvin Ind Sets} to the range $n \geq 2\delta$. 
Further results related to maximizing the number of independent sets of a fixed size for $G \in \calG(n,\delta)$ can be found in e.g. \cite{AlexanderCutlerMink,AlexanderMink,EngbersGalvin3,LawMcDiarmid}, with a complete answer to this question for $n \geq 2\delta$ given by Gan, Loh, and Sudakov \cite{GanLohSudakov}.

With the results on regular graphs (both for a large class of $H$ and also for any fixed $H$ asymptotically) and Theorem \ref{thm-Galvin Ind Sets} in mind, the following question is natural.

\begin{question}\label{conj-H Colorings Min Degree}
For which fixed $\delta \geq 1$ and $H$ does there exists a constant $c(\delta,H)$ (depending on $\delta$ and $H$) such that for $n \geq c(\delta,H)$ and $G \in \calG(n,\delta)$,
\[
\hom(G,H) \leq \max \{\hom(K_{\delta+1},H)^{\frac{n}{\delta+1}},  \hom(K_{\delta,\delta},H)^{\frac{n}{2\delta}}, \hom(K_{\delta,n-\delta},H) \}?
\]
\end{question}

Notice for $2\delta(\delta+1) |n$ and $n$ large, we have that for the three examples $H=E_2^{\text{loop}}$ (the disjoint union of two looped vertices), $H=K_2$, and $H=H_{\text{ind}}$, 
 the number of $H$-colorings of a graph $G \in \calG(n,\delta)$ is maximized by $G = \frac{n}{\delta+1} K_{\delta+1}$, $G=\frac{n}{2\delta} K_{\delta,\delta}$, and $G=K_{\delta,n-\delta}$, respectively.

The purpose of this paper is to make progress toward answering Question \ref{conj-H Colorings Min Degree}.  We provide an answer to the question for $\delta=1$ and $\delta=2$, and characterize the graphs that achieve equality.  We also find an infinite family of $H$ for which $\hom(G,H) \leq \hom(K_{\delta,n-\delta},H)$ for all $G \in \calG(n,\delta)$ (for sufficiently large $n$), with equality only for $G = K_{\delta,n-\delta}$.  Before we formally state these theorems, we highlight the degree conventions and notations that we will follow for the remainder of the paper.

\begin{convention}
For $v \in V(H)$, let $d(v)$ denote the degree of $v$, where loops count \emph{once} toward the degree.  While $\delta$ will always refer to the minimum degree of a graph $G$, $\Delta$ will always denote the maximum degree of a graph $H$ (unless explicity stated otherwise). 
\end{convention}

Notice that with this convention we have $\hom(K_{1,a},H) = \sum_{v \in V(H)} d(v)^{a}$; in particular, we have $\hom(K_2,H) = \sum_{v \in V(H)} d(v)$. In the context of these simple expressions for $\hom(K_{1,a},H)$, the degree convention for loops is natural in this setting.

\begin{theorem}\label{thm-Strong delta1}
{\em \textbf{($\delta=1$).}} Fix $H$, $n\geq 2$ and $G \in \calG(n,1)$. 
		\begin{enumerate}
			\item Suppose that $H \neq K_{\Delta}^{\text{loop}}$ satisfies $\hom(K_2,H) \geq \Delta^2$.  Then 
			\[
			\hom(G,H) \leq \hom(K_2,H)^{\frac{n}{2}},
			\]
			with equality only for $G = \frac{n}{2} K_{2}$.
			
			\item Suppose that $H$ satisfies $\hom(K_2,H) < \Delta^2$, and let $n_0 = n_0(H)$ be the smallest integer in $\{3,4,\ldots\}$ satisfying $\hom(K_2,H) < \hom(K_{1,n_0-1},H)^{\frac{2}{n_0}}$. 
			
			\begin{enumerate}
				\item If $2 \leq n < n_0$, then 
				\[
				\hom(G,H) \leq \hom(K_{2},H)^{\frac{n}{2}},
				\]
				with equality only for $G=\frac{n}{2} K_2$ [unless $n=n_0-1$ and $\hom(K_2,H)  = \hom(K_{1,n_0-2},H)^{\frac{2}{n_0-1}}$, 
				in which case $G = K_{1,n-1}$ also achieves equality].
				\item If $n \geq n_0$, then
				\[
				\hom(G,H) \leq \hom(K_{1,n-1},H),
				\]
				with equality only for $G = K_{1,n-1}$.
			\end{enumerate}
		\end{enumerate}
\end{theorem}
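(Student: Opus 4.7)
The plan is to dominate $\hom(G,H)$ by the homomorphism count of a spanning star forest of $G$, and then reduce that count to a comparison between just two star-forest shapes: a perfect matching $\tfrac{n}{2}K_2$ and a single spanning star $K_{1,n-1}$. To begin, since $\delta(G)\ge 1$, $G$ admits a minimum edge cover, which by Gallai's theorem is a vertex-disjoint union of stars $K_{1,a_1}\cup\cdots\cup K_{1,a_k}$ with $a_i\ge 1$ and $\sum_i(a_i+1)=n$. Call this spanning subgraph $F$, and set $\phi(a):=\sum_{v\in V(H)}d(v)^a=\hom(K_{1,a},H)$. Since edge deletions only increase hom-counts, $\hom(G,H)\le \hom(F,H)=\prod_i\phi(a_i)$, and the task reduces to maximizing this product over all compositions $(a_i)$.

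Two inequalities for $\phi$ do the rest. First, for $a,b\ge 1$,
\[
\phi(a)\phi(b)\le \phi(1)\phi(a+b-1),
\]
which after symmetrizing and cancelling reduces to $(x^{a-1}-y^{a-1})(x^{b-1}-y^{b-1})\ge 0$ for $x,y\ge 0$ (Muirhead/rearrangement, applied with $x=d(u), y=d(v)$). Iterating consolidates any star forest into the shape ``$m$ matching edges $\cup$ one star $K_{1,n-2m-1}$'' without decreasing the product. Second, $\log\phi(a)=\log\sum_v e^{a\log d(v)}$ is a cumulant generating function, hence convex in $a$; so $\ell(a):=\log\phi(a)-\tfrac{a+1}{2}\log\phi(1)$ is convex with $\ell(1)=0$, and its maximum on $[1,n-1]$ occurs at an endpoint. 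Together these give $\hom(G,H)\le \max\{\hom(K_2,H)^{n/2},\,\hom(K_{1,n-1},H)\}$.

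For part (1), the hypothesis $\sum_v d(v)\ge\Delta^2$ gives $\phi(n-1)\le\Delta^{n-2}\phi(1)\le\phi(1)^{n/2}$, so the matching term dominates. For part (2), $\sum_v d(v)<\Delta^2$ forces $H$ to be non-regular (else $\sum d(v)=|V(H)|\Delta\ge\Delta^2$), granting strict convexity of $\ell$. The ratio $r(n):=\phi(n-1)/\phi(1)^{n/2}$ has $\log r$ convex in $n$ and $\log r(1)=0$, so once $r(n_0)>1$ the secant-line estimate $\log r(n)\ge\tfrac{n-1}{n_0-1}\log r(n_0)$ keeps $r(n)>1$ for all $n\ge n_0$, justifying the clean dichotomy at $n_0$.

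Equality will be the delicate step. Strict convexity of $\ell$ (holding whenever $H\ne K_\Delta^{\text{loop}}$) concentrates the maximum at a single endpoint, while Muirhead equality in the consolidation step forces every merged pair to involve a $K_2$ or two equal-degree vertices of $H$. When the winning endpoint is $a=1$, the forest must be a perfect matching, and since $H\ne K_\Delta^{\text{loop}}$ contains two non-adjacent colors, any extra edge in $G$ admits a matching coloring violating that edge, forcing $G=\tfrac{n}{2}K_2$. The parallel argument at $a=n-1$ forces $G=K_{1,n-1}$; the boundary case in (2a) with $\phi(n-1)=\phi(1)^{n/2}$ at $n=n_0-1$ permits both extremes simultaneously. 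The remaining $H$-regular subcase of (1), where strict convexity fails, is handled directly via $\phi(a)=|V(H)|\Delta^a$, making $\prod\phi(a_i)=(|V(H)|/\Delta)^k\Delta^n$ maximal only when $k=n/2$. This equality bookkeeping, rather than any single inequality, is the main obstacle.
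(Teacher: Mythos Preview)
Your proof is correct and takes a genuinely different route from the paper's. Both reduce to spanning star forests and study the power sums $\phi(a)=\sum_v d(v)^a$, but the paper isolates the optimal star by analyzing $x\mapsto(\sum_v d(v)^{x-1})^{1/x}$ over real $x\ge 2$, invoking an $L^p$-interpolation lemma (log-convexity of $1/p\mapsto\|g\|_p$, cited from Tao as Lemma~\ref{lem-lp delta 1}/Proposition~\ref{thm-Lp}) to show this function has at most one local extremum; combined with its limit $\Delta$ as $x\to\infty$ and a case split at $x=2$, this pins the maximum at $x=2$ or $x=n$, and the product over a star forest is then bounded termwise by the per-vertex maximum. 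You instead consolidate an arbitrary star forest into ``matching edges plus one big star'' via the Muirhead-type inequality $\phi(a)\phi(b)\le\phi(1)\phi(a+b-1)$, and compare across the resulting one-parameter family using convexity of $a\mapsto\log\phi(a)$ as a cumulant generating function. Your argument is more self-contained, needing no external interpolation result; the paper's is slightly leaner, since the per-vertex bound dispatches every star forest at once without a consolidation step.

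One slip: you write ``$\log r(1)=0$'', but with $r(n)=\phi(n-1)/\phi(1)^{n/2}$ it is $\log r(2)=0$; anchor the secant at $n=2$ and the monotonicity argument for part~(2) goes through unchanged. Your equality discussion is at the same level of detail as the paper's (which also leaves cases to the reader); just note that your ``parallel argument'' for the $K_{1,n-1}$ endpoint needs the two non-adjacent colors to share a common neighbor, which does follow under the case-2 hypothesis $\sum_v d(v)<\Delta^2$ by choosing them among the $\Delta$ neighbors of a maximum-degree vertex.
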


\begin{remark}
Since $\hom(K_2,H) = \sum_{v \in V(H)} d(v)$, the conditions on $H$ in Theorem \ref{thm-Strong delta1} may also be written as $\sum_{v \in V(H)} d(v) \geq \Delta^2$ and $\sum_{v \in V(H)} d(v) < \Delta^2$. 
\end{remark}

\begin{theorem}\label{thm-Strong delta2}
{\em \textbf{($\delta=2$).}} Fix $H$. 
\begin{enumerate}
	\item Suppose that $H \neq K_{\Delta}^{\text{loop}}$ satisfies $\max \{ \hom(C_3,H)^{\frac{1}{3}}, \hom(C_4,H)^{\frac{1}{4}} \} \geq \Delta$.  Then for all $n \geq 3$ and $G \in \calG(n,2)$, 
	\[
	\hom(G,H) \leq \max\{ \hom(C_3,H)^{\frac{n}{3}}, \hom(C_4,H)^{\frac{n}{4}}\},
	\]
	with equality only for $G = \frac{n}{3} C_3$ (when $\hom(C_3,H)^{\frac{1}{3}} > \hom(C_4,H)^{\frac{1}{4}}$), $G = \frac{n}{4} C_4$ (when $\hom(C_3,H)^{\frac{1}{3}} < \hom(C_4,H)^{\frac{1}{4}}$), or the disjoint union of copies of $C_3$ and copies of $C_4$ (when $\hom(C_3,H)^{\frac{1}{3}} = \hom(C_4,H)^{\frac{1}{4}}$).
	
	\item Suppose that $H$ satisfies $\max \{ \hom(C_3,H)^{\frac{1}{3}}, \hom(C_4,H)^{\frac{1}{4}} \} < \Delta$.  Then there exists a constant $c_H$ such that for $n>c_H$ and $G \in \calG(n,2)$,
	\[
	\hom(G,H) \leq \hom(K_{2,n-2},H),
	\]
	with equality only for $G = K_{2,n-2}$.

\end{enumerate}
\end{theorem}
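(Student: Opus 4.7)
The proof of Theorem \ref{thm-Strong delta2} proceeds by a structural analysis of $G \in \calG(n,2)$, with Theorem \ref{thm-2 regular graphs} providing the 2-regular base case. Write $M_H := \max\{\hom(C_3,H)^{1/3}, \hom(C_4,H)^{1/4}\}$ throughout.

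For Part 1 ($M_H \geq \Delta$), the target bound is $\hom(G,H) \leq M_H^n$. I would first observe that whenever $G$ contains an edge $\{u,w\}$ with both $d_G(u), d_G(w) \geq 3$, the graph $G - \{u,w\}$ is still in $\calG(n,2)$ and (unless $H = K_\Delta^{\text{loop}}$) satisfies $\hom(G - \{u,w\}, H) > \hom(G,H)$. Iterating produces a graph $G^*$ in which every vertex of degree $\geq 3$ has all neighbors of degree exactly $2$. Such a $G^*$ decomposes naturally into a ``hub'' set $V_{\geq 3}$ (an independent set), free cycles among degree-2 vertices disconnected from the hubs, and path-spokes running between pairs of hubs through chains of degree-2 vertices. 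Expanding $\hom(G^*,H)$ as a sum over hub colorings produces a product of walk-counts $(W^\ell)_{a,b}$ (for $W$ the adjacency matrix of $H$) for each spoke and $\hom(C_\ell, H) \leq M_H^\ell$ for each free cycle by Theorem \ref{thm-2 regular graphs}. The spectral identity $\sum_{a,b}(W^\ell)_{a,b}^2 = \hom(C_{2\ell},H) \leq M_H^{2\ell}$, together with the hypothesis $M_H \geq \Delta$, then allows one to absorb the $|V(H)|^{|V_{\geq 3}|}$ factor from the sum over hub colorings, yielding $\hom(G^*,H) \leq M_H^n$ with equality tracked through the iteration.

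For Part 2 ($M_H < \Delta$), the target is $\hom(G,H) \leq \hom(K_{2,n-2}, H)$. Note that $\hom(K_{2,n-2}, H) = \sum_{a,b \in V(H)} |N_H(a) \cap N_H(b)|^{n-2} \geq \Delta^{n-2}$, from the diagonal term $a = b$ at a maximum-degree vertex. For 2-regular $G$, Theorem \ref{thm-2 regular graphs} gives $\hom(G,H) \leq M_H^n$, which is $o(\Delta^{n-2})$ since $M_H < \Delta$. For non-2-regular $G$ I would split on $D = \Delta(G)$. In the moderate-degree regime $D \leq n - c_H$, a pivot argument (fixing colors of a few highest-degree vertices and using min-degree 2 on the rest, so each remaining vertex has at most $\Delta$ choices constrained by two neighbors) yields a bound of order $|V(H)|^{O(1)} \Delta^{n-c_H}$, beaten by $\Delta^{n-2}$ for $n$ large. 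In the high-degree regime $D \geq n - c_H$, the graph $G$ is structurally close to $K_{2,n-2}$: a careful local count, tracking how any deviation from $K_{2,n-2}$ affects the contribution of pairs $(a,b)$ with maximum codegree in $H$, gives $\hom(G,H) \leq \hom(K_{2,n-2},H)$ with equality only for $G = K_{2,n-2}$.

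The main obstacle I anticipate is the high-degree regime of Part 2: graphs that differ from $K_{2,n-2}$ by only a small perturbation share the leading $\Delta^{n-2}$ behavior, so the comparison must precisely account for which pairs $(a,b) \in V(H)^2$ realize the maximum codegree $\Delta$ and how each structural deviation decreases their total contribution. Preserving strict inequality through both the edge reductions in Part 1 and the local comparisons in Part 2 also requires careful bookkeeping of the equality cases to obtain the stated uniqueness of the extremizer.
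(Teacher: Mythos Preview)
Your Part~1 outline has the right structural decomposition (hubs, spokes, free cycles), but the ``absorption'' step is not justified and does not follow from the stated spectral identity. After fixing colors $a_1,\ldots,a_k$ on the hubs, the contribution is $\prod_s (W^{\ell_s})_{a_{i_s},a_{j_s}}$, where the spokes form an arbitrary multigraph on $\{1,\ldots,k\}$; summing over $(a_1,\ldots,a_k)$ gives a tensor-network expression that does not in general reduce to $\sum_{a,b}(W^\ell)_{a,b}^2 = \hom(C_{2\ell},H)$. Already for a theta graph (two hubs, three spokes) the relevant sum is $\sum_{u,v}(W^a)_{uv}(W^b)_{uv}(W^c)_{uv}$, which is not a $C_{2\ell}$ count. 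The paper sidesteps this by a peeling lemma (Lemma~\ref{lem-delta2structure}): any edge-min-critical $G\in\calG(n,2)$ that is not a union of cycles has $V(G)=Y_1\cup Y_2$ with $Y_1$ a path attached only at its endpoints and $Y_2\in\calG(n-|Y_1|,2)$. One bounds the extensions to $Y_1$ by $\Delta^{|Y_1|}\le M_H^{|Y_1|}$ and inducts on $Y_2$. This is your decomposition carried out one spoke at a time, and the recursion makes the hub count disappear automatically.

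The more serious gap is in Part~2. Your pivot argument claims a bound $|V(H)|^{O(1)}\Delta^{n-c_H}$ in the moderate-degree regime, but fixing a few high-degree vertices and coloring greedily only yields $|V(H)|^{O(1)}\Delta^{n-O(1)}$: minimum degree $2$ does not guarantee that each vertex has two \emph{already-colored} neighbors in a greedy order, so you save only a bounded number of $\Delta$-factors, not $c_H$ of them. To get strictly below $\Delta^{n-2}$ you must actually use $M_H<\Delta$, i.e., exhibit many vertices lying on cycles or long internal paths whose extension count is strictly below the trivial bound. The paper does this via a constructive refinement of the peeling lemma (Corollary~\ref{cor-delta2decomposition}): every edge-min-critical $G$ is built from disjoint cycles by iteratively attaching paths. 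Then either many vertices sit in the cycles or in attached paths of length $\ge 2$---and bounds on such pieces (Theorem~\ref{thm-2 regular graphs}, Corollary~\ref{cor-short paths}, Lemma~\ref{lem-long paths}) force $\hom(G,H)<\Delta^{n-2}$ directly---or else almost all vertices are length-$1$ paths, and pigeonhole on the $O(1)$ possible attachment points produces two vertices $w_1,w_2$ with $\Omega(n)$ common degree-$2$ neighbors. Only at that point is $G$ genuinely close to $K_{2,n-2}$, and a split on whether $(w_1,w_2)$ is colored from $S(2,H)$ finishes. Your split on $\Delta(G)$ does not reach this structure: a single vertex of degree $n-c_H$ says nothing about the remaining $c_H$ vertices, which could themselves form an arbitrary min-degree-$2$ graph.
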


Theorems \ref{thm-Strong delta1} and \ref{thm-Strong delta2} are easily seen to answer Question \ref{conj-H Colorings Min Degree} when $\delta=1$ and $\delta=2$, respectively.  Notice that if $G'$ is obtained from $G$ by deleting some edges from $G$, then $\hom(G,H) \leq \hom(G',H)$.  Because of this, their proofs focus on $G$ which are \emph{edge-min-critical} for $\delta$, meaning that the minimum degree of $G$ is $\delta$ and for any edge $e$ the minimum degree of $G - e$ is $\delta-1$.

The edge-min-critical graphs in $\calG(n,1)$ are disjoint unions of stars, and the proof of Theorem \ref{thm-Strong delta1} critically uses this fact.  Theorem \ref{thm-Strong delta2} relies on a structural characterization of edge-min-critical graphs in $\calG(n,2)$ (see Lemma \ref{lem-delta2structure}) and also uses Theorem \ref{thm-2 regular graphs}.  The global structure of edge-min-critical graphs in $\calG(n,\delta)$ for $\delta \geq 3$ is not very well understood.


\medskip 

We also make some progress in the general $\delta$ case of Question \ref{conj-H Colorings Min Degree} by providing an infinite family of $H$ for which $K_{\delta,n-\delta}$ has the largest number of $H$-colorings.  

\begin{theorem}\label{thm-ExtremalHColorings}
Fix $\delta$ and $H$. Suppose that $H$ satisfies 
 $\hom(K_2,H) = \sum_{v \in V(H)} d(v) < \Delta^2.$
  Then there exists a constant $c_H$ such that for all $n \geq (c_H)^\delta$ and $G \in \calG(n,\delta)$,
		\[
		\hom(G,H) \leq \hom(K_{\delta,n-\delta},H),
		\]
		with equality only for $G=K_{\delta,n-\delta}$.	 
		
		If $H$ also has the property that all vertices of degree $\Delta$ share the same $\Delta$ neighbors, then the same result holds for all $n \geq c_H \delta^2$. 

\end{theorem}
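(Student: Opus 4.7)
The plan is to condition each $H$-coloring $f$ of $G$ on a minimum-degree vertex $v_0 \in V(G)$ together with its $\delta$ neighbors $w_1, \ldots, w_\delta$, and then to compare term-by-term with the corresponding expansion of $\hom(K_{\delta, n-\delta}, H)$; the gap hypothesis $\hom(K_2, H) = \sum_{v \in V(H)} d(v) < \Delta^2$ will supply the quantitative slack needed to defeat the non-extremal configurations. Since $\hom(G,H)$ only grows under edge deletion, I may first restrict to edge-min-critical $G \in \calG(n, \delta)$; a short argument then shows that the high-degree set $D^c := \{v : d_G(v) > \delta\}$ is independent and has all its neighbors in $D := V(G) \setminus D^c$. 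Note that $K_{\delta, n-\delta}$ is itself edge-min-critical, with $|D^c| = \delta$.

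Conditioning on the small-side coloring of $K_{\delta, n-\delta}$ yields
\[
\hom(K_{\delta, n-\delta}, H) = \sum_{(c_1, \ldots, c_\delta) \in V(H)^\delta} \Bigl|\bigcap_{i=1}^\delta N_H(c_i)\Bigr|^{n-\delta} = \alpha_H \Delta^{n-\delta}\bigl(1 + o(1)\bigr),
\]
where $\alpha_H$ counts $\delta$-tuples $(c_1,\ldots,c_\delta)$ whose common $H$-neighborhood has the maximum size $\Delta$; in the shared-neighborhood special case one has $\alpha_H = \Delta^\delta$, whereas in general only $\alpha_H \leq |V(H)|^\delta$. For the upper bound on $\hom(G,H)$, picking $v_0 \in D$ and conditioning on the boundary tuple $(c_1, \ldots, c_\delta) = (f(w_1), \ldots, f(w_\delta))$ gives
\[
\hom(G, H) = \sum_{(c_1, \ldots, c_\delta) \in V(H)^\delta} \Bigl|\bigcap_{i=1}^\delta N_H(c_i)\Bigr| \cdot E_G(c_1, \ldots, c_\delta),
\]
where $E_G$ counts extensions to an $H$-coloring of $G - v_0$ with $w_i \mapsto c_i$. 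I would bound $E_G \leq \Delta^{n-\delta-1}$ by ordering the remaining $n - \delta - 1$ vertices so each has a previously-colored neighbor and therefore at most $\Delta$ color choices (treating components of $G - N[v_0]$ separately if necessary, each anchored to the already-colored $N[v_0]$ via an edge to some $w_i$, using edge-min-criticality). The "good" boundary tuples (with $|\bigcap N_H(c_i)| = \Delta$) then contribute at most $\alpha_H \Delta^{n-\delta}$, matching the main term of the target.

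The main obstacle is the "bad" boundary tuples, where $|\bigcap N_H(c_i)| \leq \Delta - 1$: the naive estimate $|V(H)|^\delta (\Delta-1) \Delta^{n-\delta-1}$ overshoots $\alpha_H \Delta^{n-\delta}$ by a factor of roughly $|V(H)|^\delta/\alpha_H$. I expect this to be handled by iterating the conditioning step along a well-chosen vertex sequence: each passage through a color of sub-maximal $H$-degree contributes a factor controlled by a half-power of $\hom(K_2, H)$, and the gap hypothesis $\hom(K_2, H) < \Delta^2$ ensures that cumulated across a spanning sequence this saving overpowers the enumeration cost once $n$ exceeds the stated threshold. The exponential form $n \geq (c_H)^\delta$ in the general case reflects the worst-case $|V(H)|^\delta$ enumeration of near-optimal boundary tuples; in the shared-neighborhood special case, the near-optimal tuples are tightly characterized (essentially $\Delta^\delta$ of them), reducing the threshold to the polynomial $n \geq c_H \delta^2$. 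Finally, uniqueness of the maximizer follows by tracing tightness through every step — equality forces each later vertex to have exactly $\Delta$ extension choices and every boundary tuple to be good, which together with edge-min-criticality pins down $G = K_{\delta, n-\delta}$.
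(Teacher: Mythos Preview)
Your proposal has a genuine gap at exactly the point you flag as ``the main obstacle.'' Conditioning on a single degree-$\delta$ vertex $v_0$ and its neighbors $w_1,\dots,w_\delta$ buys you a saving of at most one factor of $(\Delta-1)/\Delta$ on the bad boundary tuples, because only $v_0$ itself is forced to lie in $\bigcap_i N_H(c_i)$; the remaining $n-\delta-1$ vertices still get the trivial bound $\Delta$ each. That single factor cannot absorb the enumeration cost $|V(H)|^\delta/\alpha_H$, and your sentence about ``iterating the conditioning step along a well-chosen vertex sequence'' does not supply a mechanism: there is no reason many vertices along such a sequence should receive sub-maximal-degree colors, and the hypothesis $\hom(K_2,H)<\Delta^2$ is a statement about \emph{edges}, not about individual vertex colors, so the ``half-power of $\hom(K_2,H)$'' per vertex is not an inequality that holds. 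Moreover, even on the good tuples your bound is $\alpha_H\Delta^{n-\delta}$ with equality, so you never produce the strict inequality needed for $G\neq K_{\delta,n-\delta}$.

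The paper's proof is organized around a completely different structural object: a \emph{maximum matching} $M$ of $G$, with $I$ the unmatched (hence independent) vertices. The matching is what makes the hypothesis bite: coloring an edge of $M$ costs exactly $\sum_v d(v)=\hom(K_2,H)$, so a matching of size $k$ contributes $(\hom(K_2,H))^k$ against a budget of $\Delta^{2k}$, giving a genuine saving $(\hom(K_2,H)/\Delta^2)^k$ that grows with $k$. When $|M|=\delta$ a short augmenting-path argument forces $G=K_{\delta,n-\delta}$ (possibly with extra edges on the small side, which strictly hurt). When $|M|>C_H\delta$ the matching saving alone already beats $\Delta^{n-\delta}$. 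For the intermediate range $\delta<|M|\le C_H\delta$, the paper uses pigeonhole on the at least $\binom{|M|}{\delta}^{-1}(n-3|M|)$ vertices of $I$ sharing a common $\delta$-subset $J_1$ of matched endpoints: this produces \emph{many} vertices simultaneously constrained by the colors on $J_1$, which is exactly the amplification your single-$v_0$ scheme lacks and is where the threshold $n\ge(c_H)^\delta$ (from $\binom{|M|}{\delta}\le(eC_H)^\delta$) arises. The shared-neighborhood refinement replaces the common-neighborhood pigeonhole by a degree-sum argument that finds $\delta$ matched endpoints each individually with $\gtrsim n/|M|$ neighbors in $I$, yielding the polynomial threshold $n\ge c_H\delta^2$.
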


\noindent The following corollary warrants special attention, and is immediate.

\begin{corollary}\label{cor-special case}
Suppose that $H \neq K_{\Delta}^{\text{loop}}$ has a looped dominating vertex, or that $H$ satisfies $\hom(K_2,H) =  \sum_{v \in V(H)} d(v)  < \Delta^2$ and $H$ has a unique vertex of degree $\Delta$.  Then there exists a constant $c_H$ such that for all $n \geq c_H \delta^2$ and $G \in \calG(n,\delta)$,
\[
\hom(G,H) \leq \hom(K_{\delta,n-\delta},H),
\]
with equality only for $G=K_{\delta,n-\delta}$.
\end{corollary}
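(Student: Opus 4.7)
The plan is to deduce the corollary directly from the second (stronger) clause of Theorem~\ref{thm-ExtremalHColorings}, which requires two hypotheses on $H$: (i) $\sum_{v \in V(H)} d(v) < \Delta^2$, and (ii) every vertex of degree $\Delta$ has the same $\Delta$ neighbors. The entire task therefore reduces to checking that each of the two hypotheses of the corollary implies (i) and (ii); the extremal content and the equality characterization are borrowed wholesale from the theorem, and there is no genuine technical step to overcome.

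For the second hypothesis of the corollary there is essentially nothing to verify: condition (i) is hypothesised outright, and (ii) is vacuous because there is only one vertex of degree $\Delta$.

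For the first hypothesis, suppose $H \neq K_{\Delta}^{\text{loop}}$ has a looped dominating vertex $u$. Since the paper's convention counts loops once, $u$ has degree exactly $|V(H)|$; no vertex of $H$ can have larger degree, so $\Delta = |V(H)|$. The exclusion $H \neq K_{\Delta}^{\text{loop}}$ forces at least one vertex to have degree strictly less than $\Delta$, hence $\sum_{v} d(v) < \Delta \cdot |V(H)| = \Delta^2$, yielding (i). A vertex attains degree $|V(H)|$ precisely when it is looped and adjacent to every other vertex, so the set of degree-$\Delta$ vertices all have neighborhood equal to $V(H)$; this gives (ii).

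The only ``obstacle'' is the bookkeeping of checking that the bound $n \geq c_H \delta^2$ and the uniqueness of the extremizer $G = K_{\delta,n-\delta}$ quoted in the corollary match the second clause of Theorem~\ref{thm-ExtremalHColorings} verbatim. They do, so once (i) and (ii) have been verified in both cases the corollary follows immediately.
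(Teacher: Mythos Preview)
Your proposal is correct and matches the paper's approach exactly: the paper simply says the corollary ``is immediate'' from Theorem~\ref{thm-ExtremalHColorings}, and you have spelled out precisely why both hypotheses of the corollary imply the two conditions (degree-sum strictly below $\Delta^2$, and all degree-$\Delta$ vertices sharing the same neighborhood) needed for the second clause of that theorem. There is nothing to add or correct.
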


The graphs $H=H_{\text{ind}}$ and $H=H_{\text{WR}}$ satisfy the conditions of Corollary \ref{cor-special case}, so in particular we provide an alternate proof of Galvin's result for $H=H_{\text{ind}}$ \cite{GalvinTwoProblems}.  Another graph $H$ which satisfies the conditions of Corollary \ref{cor-special case} is the $k$-state hard-core constraint graph $H(k)$ ($k \geq 1)$, the graph with vertex set $\{0,1,\ldots,k\}$ and edge $i \sim_{H(k)} j$ if $i + j \leq k$.  This graph naturally occurs in the study of multicast communications networks, and has been considered in e.g. \cite{GalvinMartinelliRamananTetali,MitraRamananSengupta}.  

Notice that the condition on $H$ in Theorem \ref{thm-ExtremalHColorings} is necessary but not sufficient for $\hom(G,H) \leq \hom(K_{\delta,n-\delta},H)$ for all $G \in \calG(n,\delta)$.  Indeed, if $H$ is a path on three vertices with a loop on one endpoint of the path, then $\sum_{v \in V(H)} d(v) = 5$ while $\Delta = 2$.  However, for large enough $n$ and $G \in \calG(n,2)$, $\hom(G,H) \leq \hom(K_{2,n-2},H)$, as can be seen by computing $\hom(C_3,H)$, $\hom(C_4,H)$, and applying Theorem \ref{thm-Strong delta2}.

\medskip

It is also interesting to consider a maximum degree condition in addition to a minimal degree condition (see e.g. \cite{AlexanderCutlerMink,GalvinSmallDegree,Kahn1}).  Let $\calG(n,\delta,D)$ denote the set of graphs on $n$ vertices with minimum degree $\delta$ and maximum degree at most $D$.  Which graphs $G \in \calG(n,\delta,D)$ maximize $\hom(G,H)$?  For a fixed $\delta$, this question is interesting for the $H$ with the property that $\hom(G,H) \leq \hom(K_{\delta,n-\delta},H)$ for all $G \in \calG(n,\delta)$, as $K_{\delta,n-\delta} \in \calG(n,\delta,D)$ only when $D \geq n-\delta$. 

For $\delta=1$ and any $D \geq 1$, we provide an answer.

\begin{theorem}\label{thm-max degree condition}
Fix $H$ and $D \geq 1$.  For any $G \in \calG(n,1,D)$,
\[
\hom(G,H) \leq \max \{ \hom(K_{2},H)^{\frac{n}{2}},  \hom(K_{1,D},H)^{\frac{n}{1+D}} \},
\]
with the cases of equality as in Theorem \ref{thm-Strong delta1} (where $K_{1,n-1}$ is replaced by $\frac{n}{1+D}K_{1,D}$). 
\end{theorem}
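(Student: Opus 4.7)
The plan is to reduce to edge-min-critical graphs in $\calG(n,1,D)$ and then bound the star homomorphism counts via log-convexity. As in the argument sketched after Theorem \ref{thm-Strong delta1}, if $G \in \calG(n,1,D)$ admits an edge $e$ with $G - e \in \calG(n,1,D)$ then $\hom(G,H) \leq \hom(G-e,H)$, so I may assume $G$ is edge-min-critical; the edge-min-critical graphs in $\calG(n,1)$ are disjoint unions of stars, and the max-degree constraint forces $G = \bigsqcup_{i=1}^k K_{1,d_i}$ with $1 \leq d_i \leq D$ and $\sum_i(d_i+1) = n$. Since $\hom(K_{1,d},H) = \sum_{v \in V(H)} d(v)^d$, setting
\[
g(d) := \Bigl(\sum_{v \in V(H)} d(v)^d\Bigr)^{1/(d+1)}
\]
gives $g(1) = \hom(K_2,H)^{1/2}$ and $g(D) = \hom(K_{1,D},H)^{1/(D+1)}$, and the claimed bound $\hom(G,H) = \prod_i g(d_i)^{d_i+1} \leq \max\{g(1),g(D)\}^n$ reduces to the pointwise claim that $g(d) \leq \max\{g(1),g(D)\}$ for every integer $d \in [1,D]$.

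For the pointwise claim, the function $h(d) := \log \sum_{v \in V(H)} d(v)^d$ is convex in real $d \geq 1$: a short Cauchy--Schwarz computation (log-convexity of moments) shows $h''(d) \geq 0$, strictly unless all positive degrees $d(v)$ coincide. Setting $L := \log\max\{g(1),g(D)\}$, the function $\phi(d) := (d+1)L - h(d)$ is therefore concave on $[1,D]$ and satisfies $\phi(1), \phi(D) \geq 0$ by construction, so $\phi(d) \geq \min\{\phi(1),\phi(D)\} \geq 0$ throughout $[1,D]$, which unwinds to $g(d) \leq \max\{g(1),g(D)\}$. Multiplying over the $k$ components of $G$ then yields the claimed upper bound.

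For the equality analysis, when $H$ is not regular, $\phi$ is strictly concave, so $\phi(d) > 0$ for $d \in (1,D)$, forcing every component of an extremal $G$ to have $d_i \in \{1,D\}$. The comparison of $g(1)$ with $g(D)$ then determines which components occur, exactly reproducing the three regimes of Theorem \ref{thm-Strong delta1} with $n-1$ replaced by $D$: $\tfrac{n}{2}K_2$ alone when $g(1) > g(D)$, $\tfrac{n}{1+D}K_{1,D}$ alone when $g(1) < g(D)$, and an arbitrary disjoint union of $K_2$'s and $K_{1,D}$'s when $g(1) = g(D)$ (this boundary case being precisely the ``$n=n_0-1$'' subcase of Theorem \ref{thm-Strong delta1}(2)(a) after the translation $D \leftrightarrow n-1$). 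The regular case $H = K_\Delta^{\text{loop}}$ is trivial since every $G$ yields $\hom(G,H) = \Delta^n$; for regular $H \neq K_\Delta^{\text{loop}}$, $\phi$ is merely affine, but a direct comparison of $g(1) = \sqrt{|V(H)|\Delta}$ and $g(D) = (|V(H)|\Delta^D)^{1/(D+1)}$ shows $g(1) \neq g(D)$, so the extremal structure is still uniquely forced. The main obstacle is this equality bookkeeping---in particular matching the three regimes cleanly with Theorem \ref{thm-Strong delta1}---rather than the convexity bound itself.
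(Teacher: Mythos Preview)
Your argument is correct and follows the same reduction as the paper: delete edges to reach an edge-min-critical graph, which for $\delta=1$ is a disjoint union of stars, then show that $\hom(K_{1,d},H)^{1/(d+1)}$ is maximized over $d\in\{1,\dots,D\}$ at an endpoint. The difference lies in how that last step is justified. The paper invokes Lemma \ref{lem-lp delta 1} (proved via log-convexity of $L^p$ norms in $1/p$, Proposition \ref{thm-Lp}) to get a unimodality statement for $x\mapsto(\sum_v d(v)^{x-1})^{1/x}$ on $[2,\infty)$, and then simply restricts to $[2,D+1]$. You instead use the more elementary fact that $h(d)=\log\sum_v d(v)^d$ is convex (the cumulant generating function), so $\phi(d)=(d+1)L-h(d)$ is concave with nonnegative endpoint values, hence nonnegative throughout. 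This is a cleaner and more self-contained route: it avoids the detour through $L^p$ interpolation and the asymptotic analysis near $\Delta$, and it gives the strict inequality on $(1,D)$ for non-regular $H$ in one line. Your equality analysis is also a touch sharper than the paper's phrasing: in the boundary case $g(1)=g(D)$ you correctly note that any mix of $K_2$'s and $K_{1,D}$'s is extremal, whereas the paper's literal translation of the Theorem \ref{thm-Strong delta1} statement names only $\frac{n}{2}K_2$ and $\frac{n}{1+D}K_{1,D}$. One small omission you share with the paper: neither of you spells out that \emph{non}-edge-min-critical graphs cannot achieve equality (i.e., that adding an edge to an extremal union of stars strictly decreases $\hom(\cdot,H)$ when $H\neq K_\Delta^{\text{loop}}$); the paper defers this to the reader as well.
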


From a statistical physics standpoint, there is a very natural family of probability distributions that can be put on $\Hom(G,H)$.  Fix a set of positive weights $\Lambda = \{\lambda_i : i \in V(H)\}$ indexed by the vertices of $H$.  We think of $\lambda_i$ as representing the likelihood of particle $i$ appearing at a site in $G$, and formalize this by giving an element $f \in \Hom(G,H)$ weight $w_\Lambda(f) = \prod_{v \in V(G)} \lambda_{f(v)}$ and probability
\[
p_{\Lambda}(f) = \frac{w_\Lambda(f)}{Z_\Lambda(G,H)},
\]
where $Z_\Lambda(G,H) = \sum_{f} w_\Lambda(f)$ is the appropriate normalizing constant (or \emph{partition function}) of the model.  
By taking $\lambda_i = 1$ for each $i$, $p_\Lambda(f)$ is the uniform distribution on $\Hom(G,H)$ and in this case $Z_\Lambda(G,H) = \hom(G,H)$.  

Interestingly, several proofs of structural results about $H$-colorings \emph{require} passing to the weighted model first; see e.g. \cite{EngbersGalvin1,Kahn1,Kahn3}.  Our results generalize naturally to weighted $H$-colorings. 
Although the proofs of the weighted versions come with almost no extra effort, for the clarity of presentation we defer this discussion until Section \ref{sec-concludingremarks}.

The paper is laid out as follows.  In Section \ref{sec-deltageneral}, we prove Theorem \ref{thm-ExtremalHColorings} by partitioning $\calG(n,\delta)$ based on the size of a maximal matching.  Section \ref{sec-delta1} utilizes the structure of edge-min-critical graphs for $\delta=1$ to prove Theorems \ref{thm-Strong delta1} and \ref{thm-max degree condition}.   By analyzing $H$-colorings of cycles, we prove Theorem \ref{thm-2 regular graphs} in Section \ref{sec-2regular}.  Following some preliminary lemmas about edge-min-critical graphs for $\delta=2$ and $H$-colorings of paths, we prove  Theorem \ref{thm-Strong delta2} in Section \ref{sec-delta2}.  Finally, in Section \ref{sec-concludingremarks} we comment on the generalization of our results to weighted $H$-colorings and also present some related questions.


\section{Proof of Theorem \ref{thm-ExtremalHColorings}}\label{sec-deltageneral}

Suppose that we have a graph $H$ satisfying 
\begin{equation}\label{eqn-Hcondition delta}
\sum_{v \in V(H)} d(v) < \Delta^2,
\end{equation}
and let $G$ be a graph with minimum degree $\delta$.  Let $M$ be the edge set of a matching of maximum size in $G$ and $I$ the set of unmatched vertices.  

We first derive some structural properties of our graph $G$ based on $M$.  Since $M$ is maximal, $I$ forms an independent set.  Furthermore, suppose that $x_1$ and $x_2$ in $V(G)$ are matched in $M$.  If $x_1$ has at least two edges into $I$, then $x_2$ cannot be adjacent to any vertex in $I$, as this would create an augmenting path 
of length 3 and therefore a matching of larger size.  In summary:
\begin{equation} \label{eqn-matching}
\begin{array}{c}
\text{At most one vertex in an edge of the matching $M$ can have degree at }\\
\text{ least }2 \text{ into }I\text{, and if one has degree at least }2 \text{ into }I \text{ then the other}\\ 
\text{ has degree }0\text{ into }I.  
\end{array}
\end{equation}
  
For each edge in $M$, put the endpoint with the largest degree into $I$ in a set $J \subset V(G)$, and put the other endpoint in a set $K \subset V(G)$. (If the degrees are equal, make an arbitrary choice.)  A schematic picture of $G$ is shown in Figure \ref{fig-matching}; there are at most $|K| = |M|$ total edges between $I$ and $K$.

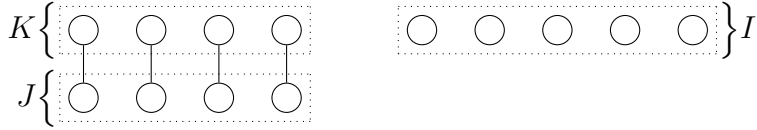
\begin{figure}[ht]	
\begin{center}

	\begin{tikzpicture}[scale=.9]
	\node (v1) at (1,1) [circle,draw,label=180:$J \Big\{$] {};
	\node (w1) at (1,2) [circle,draw,label=180:$K \Big\{$] {};
	\node (v2) at (2,1) [circle,draw] {};
	\node (w2) at (2,2) [circle,draw] {};
	\node (v3) at (3,1) [circle,draw] {};
	\node (w3) at (3,2) [circle,draw] {};
	\node (v4) at (4,1) [circle,draw] {};
	\node (w4) at (4,2) [circle,draw] {};
	\draw (v1) -- (w1);
	\draw (v2) -- (w2);
	\draw (v3) -- (w3);
	\draw (v4) -- (w4);
	\node (i1) at (6,2) [circle,draw] {};
	\node (i2) at (7,2) [circle,draw] {};
	\node (i3) at (8,2) [circle,draw] {};
	\node (i4) at (9,2) [circle,draw] {};
	\node (i5) at (10,2) [circle,draw,label=0:$\Big\} I$] {};
	
	\draw[dotted] (.65,1.65) -- (.65,2.35);
	\draw[dotted] (.65,2.35) -- (4.35,2.35);
	\draw[dotted] (4.35,2.35) -- (4.35,1.65);
	\draw[dotted] (4.35,1.65) -- (.65,1.65);
	\draw[dotted] (.65,.65) -- (.65,1.35);
	\draw[dotted] (.65,1.35) -- (4.35,1.35);
	\draw[dotted] (4.35,1.35) -- (4.35,.65);
	\draw[dotted] (4.35,.65) -- (.65,.65);
	\draw[dotted] (5.65,1.65) -- (5.65,2.35);
	\draw[dotted] (5.65,2.35) -- (10.35,2.35);
	\draw[dotted] (10.35,2.35) -- (10.35,1.65);
	\draw[dotted] (10.35,1.65) -- (5.65,1.65);
	
	\end{tikzpicture}

	\caption{The relevant structure for $G$.}
	\label{fig-matching}
\end{center}

\end{figure}

Also, if there are more than $|M|$ vertices in $I$ that are adjacent to both endpoints of some edge in $M$, then by the pigeonhole principle there are distinct $y_1, y_2 \in I$ that are adjacent to both endpoints of some fixed edge in $M$.  This would force both endpoints of an edge in $M$ to have degree at least 2 into $I$, contradicting (\ref{eqn-matching}). Therefore we have:
\begin{equation}\label{eqn-matching2}
\begin{array}{c} 
\text{There are at most }|M| \text{ vertices in }I\text{ adjacent to both endpoints of}\\
\text{ some edge in }M.
\end{array}
\end{equation}
 In particular, suppose $n \geq 3\delta-2$.  Then if $|M| < \delta$ we have $|I| \geq \delta$.  Since each $x \in I$ has at least $\delta$ neighbors to $M$, each $x \in I$ is adjacent to both endpoints of some edge in $M$.  Since $|I| \geq \delta > |M|$, this contradicts (\ref{eqn-matching2}).  Therefore if $n \geq 3\delta-2$ we have $|M| \geq \delta$ for all $G \in \calG(n,\delta)$.  We will first analyze the graphs where $|M|=\delta$ and then the graphs where $|M| > \delta$.

\medskip

 \textbf{Case 1:} Suppose that $|M| = \delta$ and $n > 3\delta$, so by  (\ref{eqn-matching2}) at most $\delta$ vertices in $I$ are adjacent to both endpoints of some edge in $M$.  Then there is at least one vertex in $I$ that is adjacent to exactly one endpoint of each edge in $M$.  However, this shows that \emph{no} vertex in $I$ can be adjacent to both endpoints of any edge in $M$, since any vertex in $I$ adjacent to both vertices of an edge in $M$ would force one endpoint of $M$ to have degree at least $2$ into $I$ and the other endpoint of $M$ to have degree at least $1$ into $I$ (contradicting (\ref{eqn-matching})).  It follows that each vertex in $I$ must be adjacent to each vertex in $J$, and so by (\ref{eqn-matching}) there are no edges between $I$ and $K$.  
 
 Now suppose $k_1, k_2 \in K$ with $k_1 \sim k_2$.  Then there exist distinct $j_1, j_2 \in J$ with $k_1 \sim_M j_1$ and $k_2 \sim_M j_2$.  Letting $i_1$ and $i_2$ denote any two distinct vertices in $I$ (and recalling that everything in $I$ is adjacent to everything in $J$), $i_1 \sim j_1 \sim k_1 \sim k_2 \sim j_2 \sim i_2$ is an augmenting path of length $5$, which contradicts the maximality of $M$.  Therefore $K$ is an independent set and so $K \cup I$ is an independent set.  Since $G$ has minimum degree $\delta$, every vertex in $K \cup I$ is adjacent to every vertex in $J$, and so $G$ must be the complete bipartite graph $K_{\delta,n-\delta}$ with some edges added to the size $\delta$ partition class.
 

We now show that adding any edge to the size $\delta$ partition class in $K_{\delta,n-\delta}$ will strictly decrease the number of $H$-colorings.  Since $H$ cannot contain $K_{\Delta}^{\text{loop}}$ (by (\ref{eqn-Hcondition delta})), there are two non-adjacent neighbors of a vertex in $H$ with degree $\Delta$. (It is possible that the two non-adjacent neighbors here are actually the same vertex in $H$, where here the non-adjacency means that no loop is present.)  If any edge is added to the size $\delta$ partition class in $K_{\delta,n-\delta}$, then it is impossible for any $H$-coloring to color the endpoints of that edge with the non-adjacent vertices in $H$, but such a coloring is possible in $K_{\delta,n-\delta}$.  Since any $H$-coloring of $K_{\delta,n-\delta}$ with an edge added is an $H$-coloring of $K_{\delta,n-\delta}$, this shows that the number of $H$-colorings strictly decreases whenever an edge is added to $K_{\delta,n-\delta}$. 

In summary, 
we have shown that if $G$ satisfies $n > 3\delta$, $|M| \leq \delta$, and $G \neq K_{\delta,n-\delta}$ then $\hom(G,H) < \hom(K_{\delta,n-\delta},H)$.

\medskip

 \textbf{Case 2:} Now suppose that $|M| = k \geq \delta + 1$. We will show that for large enough $n$ we have $\hom(G,H) < \hom(K_{\delta,n-\delta},H)$, which will complete the proof.

Let $S(\delta, H)$ denote the vectors in $V(H)^\delta$ with the property that the elements of the vector have $\Delta$ common neighbors, and let $s(\delta,H) = |S(\delta,H)|$.  (Note that $S(\delta,H) \neq \emptyset$, since if $v \in V(H)$ with $d(v) = \Delta$ then $(v,v,\ldots,v) \in S(\delta,H)$.)  We obtain a lower bound on $\hom(K_{\delta,n-\delta},H)$ by coloring the size $\delta$ partition class using an element of $S(\delta,H)$, and then independently coloring the vertices in the size $n-\delta$ partition class using the $\Delta$ common neighbors. This gives
\[
\hom(K_{\delta,n-\delta},H) \geq s(\delta,H) \Delta^{n-\delta}.
\]
We will show that for $n$ large and $k \geq \delta+1$,  $\hom(G,H) < s(\delta,H) \Delta^{n-\delta}$.  

Our initial coloring scheme will be to color $J$ arbitrarily first, then $K$, then $I$, keeping track of an upper bound on the number of choices we have for the color at each vertex.  If a vertex in $J$ is colored with $v \in V(H)$, its neighbor in $M$ has at most $d(v)$ choices for a color.  Since each vertex in $I$ is adjacent to some vertex in $J \cup K$, there are at most $\Delta$ choices for the color of each vertex in $I$.  This gives
\begin{equation*} 
\hom(G,H) \leq \left( \sum_{v \in V(H)} d(v) \right)^k \Delta^{n-2k} = \Delta^{n} \left( \frac{\sum_{v \in V(H)} d(v)}{\Delta^2} \right)^{k}.
\end{equation*}

Recalling that $H$ satisfies (\ref{eqn-Hcondition delta}), if $k > \delta \log \Delta / \log \left(\frac{\Delta^2}{\sum_{v \in V(H)} d(v)} \right) = C_{H} \delta$ this upper bound is smaller than $\Delta^{n-\delta}$.  So we may further assume that $\delta + 1 \leq k \leq C_{H} \delta$.

\medskip

Let $I' \subset I$ be the set of vertices in $I$ with neighbors exclusively in $J$, so by (\ref{eqn-matching}) we have $|I'| \geq n-3k$.  Since each vertex in $I'$ has at least $\delta$ neighbors in $J$, we imagine each $x \in I'$ picking a subset of size $\delta$ from $J$ (from among the $\binom{k}{\delta}$ possibilities).  By the pigeonhole principle there is a set $J_1 \subset J$ with $|J_1| = \delta$ and at least $(n-3k)/\binom{k}{\delta}$ vertices in $I'$ adjacent to each vertex in $J_1$.  See Figure \ref{fig-large complete graph}.

\begin{figure}[ht]
\begin{center}

	\begin{tikzpicture}[scale=.9]
	\node (v1) at (1,1) [circle,draw,label=180:$J \Big\{$] {};
	\node (w1) at (1,2) [circle,draw,label=180:$K \Big\{$] {};
	\node (v2) at (2,1) [circle,draw] {};
	\node (w2) at (2,2) [circle,draw] {};
	\node (v3) at (3,1) [circle,draw] {};
	\node (w3) at (3,2) [circle,draw] {};
	\node (v4) at (4,1) [circle,draw] {};
	\node (w4) at (4,2) [circle,draw] {};
	\draw (v1) -- (w1);
	\draw (v2) -- (w2);
	\draw (v3) -- (w3);
	\draw (v4) -- (w4);
	\node (i1) at (6,2) [circle,draw] {};
	\node (i2) at (7,2) [circle,draw] {};
	\node (i3) at (8,2) [circle,draw] {};
	\node (i4) at (9,2) [circle,draw] {};
	\node (i5) at (10,2) [circle,draw,label=0:$\Big\} I$] {};
	
	\foreach \from/\to in {v4/i1,v4/i2,v4/i3,v3/i1,v3/i2,v3/i3}
	\draw (\from) -- (\to);
	
\draw [thick,decoration={brace,mirror,raise=10pt},decorate] (2.75,1) --  node[below=10pt]{$J_1$}(4.25,1);	
	
\draw [thick,decoration={brace,raise=10pt},decorate] (5.75,2) --  node[above=10pt]{$I'$}(8.25,2);

	\draw[dotted] (.65,1.65) -- (.65,2.35);
	\draw[dotted] (.65,2.35) -- (4.35,2.35);
	\draw[dotted] (4.35,2.35) -- (4.35,1.65);
	\draw[dotted] (4.35,1.65) -- (.65,1.65);
	\draw[dotted] (.65,.65) -- (.65,1.35);
	\draw[dotted] (.65,1.35) -- (4.35,1.35);
	\draw[dotted] (4.35,1.35) -- (4.35,.65);
	\draw[dotted] (4.35,.65) -- (.65,.65);
	\draw[dotted] (5.65,1.65) -- (5.65,2.35);
	\draw[dotted] (5.65,2.35) -- (10.35,2.35);
	\draw[dotted] (10.35,2.35) -- (10.35,1.65);
	\draw[dotted] (10.35,1.65) -- (5.65,1.65);
	
	\end{tikzpicture}
	\label{fig-large complete graph}
\end{center}
\caption{Vertices in $I'$ adjacent to every vertex in $J_1$.}
\end{figure}
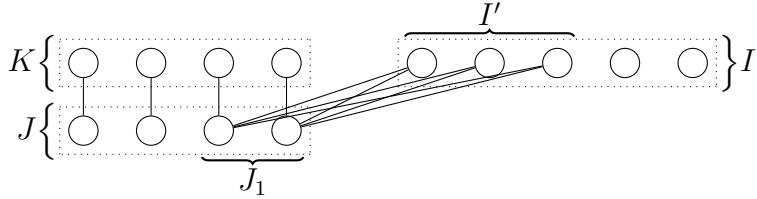

We partition the $H$-colorings of $G$ based on whether the colors on $J_1$ form a vector in $S(\delta,H)$ or not.  If they do, then we next color $J\setminus J_1$, then $K$, and then $I$, giving at most
\[
s(\delta,H) \cdot \left( \sum_{v \in V(H)} d(v) \right)^{k-\delta} \cdot \Delta^\delta \cdot \Delta^{n-2k}
\]
$H$-colorings of $G$ of this type.

If the colors on $J_1$ do not form a vector in $S(\delta,H)$, then we have at least $(n-3k)/\binom{k}{\delta}$ vertices in $I$ (namely those in $I'$) that have at most $\Delta-1$ choices for their color (here we're using that all edges are present between $I'$ and $J_1$).  Utilizing only this restriction, coloring $J\setminus J_1$, then $K$, then $I$ gives at most
\[
\left( \sum_{v \in V(H)} d(v) \right)^{k} \Delta^{n-2k} \left( \frac{\Delta-1}{\Delta} \right)^{\frac{n-3k}{\binom{k}{\delta}}}
\]
$H$-colorings of $G$ of this type.  Therefore, using $k \leq C_H \delta$ and $\binom{a}{b} \leq \left( \frac{ea}{b} \right)^b$, we have
\begin{eqnarray*}
\hom(G,H) &\leq& s(\delta,H) \cdot \left( \sum_{v \in V(H)} d(v) \right)^{k-\delta} \Delta^{n-2k+\delta} \\
&& \qquad + \left( \sum_{v \in V(H)} d(v) \right)^{k} \Delta^{n-2k} \left( \frac{\Delta-1}{\Delta} \right)^{\frac{n-3k}{\binom{k}{\delta}}} \\
&\leq& s(\delta,H) \cdot \left( \frac{\sum_{v \in V(H)} d(v)}{\Delta^2} \right)^{k-\delta} \Delta^{n-\delta} \\
&& \qquad + \left( \frac{\sum_{v \in V(H)} d(v)}{\Delta^2} \right)^{k} \Delta^{n} \left( \frac{\Delta-1}{\Delta} \right)^{\frac{n-3 C_{H} \delta}{(e \cdot C_H)^{\delta}}}
\end{eqnarray*}
so that
\begin{equation*}
\hom(G,H) \leq s(\delta,H)\Delta^{n-\delta}  \left( \frac{\sum_{v \in V(H)} d(v)}{\Delta^2} \right)^{k-\delta} \left( 1 +  r_1(\delta,H) \right)
\end{equation*}
where
\[
r_1(\delta,H) = \frac{1}{s(\delta,H)} \left( \frac{\sum_{v \in V(H)} d(v)}{\Delta} \right)^{\delta} \left( \frac{\Delta-1}{\Delta} \right)^{\frac{n-3 C_{H} \delta}{(e \cdot C_H)^{\delta}}}.
\]
For $\delta+1 \leq k \leq C_H \delta$ and $n \geq (c_H)^{\delta}$, this is smaller than $s(\delta,H) \Delta^{n-\delta}$.

\medskip

We sharpen the bounds on $n$ when all of the vertices of $H$ with degree $\Delta$ have identical neighborhoods.  (Notice that this only requires a new argument for the range $\delta+1 \leq k \leq C_H\delta$.)  Let $V_{=\Delta}(H)$ denote the set of degree $\Delta$ vertices in $H$, and so by assumption each vertex in $V_{=\Delta}(H)$ has the same $\Delta$ neighbors (and also $s(\delta,H) = |V_{=\Delta}(H)|^{\delta}$).  Our new strategy is to find a set of $\delta$ vertices in $J$ with large degree to $I$ individually instead of finding those with a large common neighborhood in $I$.  

Let $J = \{x_1, \ldots, x_k\}$ and let $a_t$ denote the number of edges from $x_t$ to $I$ for each $t$.  Without loss of generality, assume $a_1 \geq a_2 \geq \cdots \geq a_k$.  Since $I$ is an independent set of size $n-2k$, there are at least $\delta(n-2k)$ edges from $I$ to $J \cup K$.  Since the degree to $I$ of each vertex in $K$ is at most $1$ and each $a_t$ is at most $n-2k$, we have
\[
\delta(n-2k)-k \leq \sum_{t=1}^{k} a_t \leq (k-\delta+1)a_{\delta} + (\delta-1)(n-2k),
\]
as $(n-2k)-k$ is a lower bound on the number of edges from $I$ to $J$, $a_\delta + a_{\delta+1} + \cdots + a_k \leq (k-\delta+1) a_{\delta}$ (by the ordering of the $a_i$'s), and $a_{\delta-1} \leq \cdots \leq a_1 \leq n-2k$.
This gives
\[
a_{\delta} \geq \frac{n-3k}{k-\delta + 1}.
\]
Now set $J_2 = \{x_1, \ldots, x_{\delta}\}$.  We first upper bound the number of $H$-colorings of $G$ that color each vertex in $J_2$ with a color from $V_{=\Delta}(H)$.  By coloring $J \setminus J_2$ arbitrarily, then coloring $K$, then coloring $I$, we have at most
\begin{equation}\label{eqn-noJ'}
s(\delta,H) \left(  \sum_{v \in V(H)} d(v) \right)^{k-\delta} \Delta^{n-2k+\delta} = s(\delta,H) \Delta^{n-\delta} \left( \frac{\sum_{v \in V(H)} d(v)}{\Delta^{2}} \right)^{k-\delta}
\end{equation}
$H$-colorings of $G$ of this type.

By similar means, we can put an upper bound on the number of $H$-colorings of $G$ that have some vertex of $J_2$ colored from $V(H) \setminus V_{=\Delta}(H)$.  Here, at least $\frac{n-3k}{k-\delta+1}$ vertices in $I$ will have at most $\Delta-1$ choices of a color for each coloring of $J \cup K$.  Using $k \leq C_H \delta$, we have at most 
\begin{eqnarray}\label{eqn-J'}
\nonumber &&\left( \sum_{v \in V(H)} d(v) \right)^{k} \Delta^{n-2k} \left( \frac{\Delta-1}{\Delta} \right)^{\frac{n-3k}{k-\delta+1}}  \\
 && \qquad \qquad \leq \Delta^{n-\delta} \left( \frac{\sum_{v \in V(H)} d(v)}{\Delta^2} \right)^{k} \Delta^{\delta} \left( \frac{\Delta-1}{\Delta} \right)^{\frac{n-3C_{H}\delta}{C_{H}\delta-\delta+1}}
\end{eqnarray}
$H$-colorings of $G$ of this type.  Combining (\ref{eqn-noJ'}) and (\ref{eqn-J'}) we find that
\[
\hom(G,H) \leq s(\delta,H)\Delta^{n-\delta} \bigg( \frac{\sum_{v \in V(H)} d(v)}{\Delta^2} \bigg)^{k-\delta} (1+r_2(\delta,H))
\]
where
\[
r_2(\delta,H) = \frac{1}{s(\delta,H)}  \bigg( \frac{\sum_{v \in V(H)} d(v)}{\Delta} \bigg)^{\delta} \left( \frac{\Delta-1}{\Delta} \right)^{ \frac{n-3C_{H}\delta}{C_{H}\delta-\delta+1}}
\]
For $\delta+1 \leq k \leq C_H\delta$ and $n > c_H \delta^2$, this is smaller than $s(\delta,H) \Delta^{n-\delta}$.


%

\section{Proof of Theorems \ref{thm-Strong delta1} and \ref{thm-max degree condition} ($\delta=1$)}\label{sec-delta1}

While Theorem \ref{thm-max degree condition} is more general than Theorem \ref{thm-Strong delta1}, for clarity of presentation we begin with the proof of Theorem \ref{thm-Strong delta1} and later show how to modify the proof to obtain Theorem \ref{thm-max degree condition}.  Recall that we will assume that $|V(H)| = q$, and furthermore we will assume that $G$ is edge-min-critical, so $G$ has no edge between two vertices of degree larger than one.  (This will give us the inequalities desired; we will address the uniqueness statements in the theorems separately.)  In particular, $G$ is the disjoint union of stars, so we can write $G = \cup_{i} K_{1,n_i-1}$, where $\sum_{i} n_i = n$. 

%
%
%
%
%
%

Since the stars $K_{1,n_i-1}$ are disjoint and can therefore be colored independently, we have 
\[
\hom(G,H) = \prod_{i} \hom(K_{1,n_i-1},H) = \prod_{i} \hom(K_{1,n_i-1},H)^{\frac{n_i}{n_i}}.
\]
If $x$ is an integer value in $[2,n]$ that maximizes $\hom(K_{1,x-1},H)^{\frac{1}{x}}$, then
\begin{equation}\label{eqn-delta=1}
\hom(G,H) \leq \prod_{i} \hom(K_{1,x-1},H)^{\frac{n_i}{x}} =  \hom(K_{1,x-1},H)^{\frac{n}{x}},
\end{equation}
with equality occurring (when $x|n$) for $\frac{n}{x} K_{1,x-1}$.
Because of this, it will be useful to know the integer value(s) of $x \geq 2$ that maximize $\hom(K_{1,x-1},H)^{\frac{1}{x}}$.   
\medskip

First we derive a formula for $\hom(K_{1,x-1},H)^{\frac{1}{x}}$ for each integer $x \geq 2$.  Notice that all $H$-colorings of $K_{1,x-1}$ can be obtained by coloring the center of the star with any $v \in V(H)$ and then coloring the leaves (independently) with any neighbor of $v$, so
\begin{equation} \label{eqn-star formula}
\hom(K_{1,x-1},H) = \sum_{v \in V(H)} d(v)^{x-1}.
\end{equation}

Now (\ref{eqn-star formula}) holds for each integer $x \geq 2$, and so it will be useful to know the maximum value of 
\begin{equation} \label{eqn-star formula2}
\left( \sum_{v \in V(H)} d(v)^{x-1} \right)^{\frac{1}{x}}
\end{equation}
 over all integers $x \geq 2$.  In fact, we will study (\ref{eqn-star formula2}) in a slightly more general setting; for the remainder of this proof we will analyze (\ref{eqn-star formula2}) over all \emph{real} numbers $x \geq 2$.

\medskip

Recall that we are assuming that $H$ has no isolated vertices, so for all $v \in V(H)$ we have $1 \leq d(v) \leq \Delta$.  Since there exists a $w \in V(H)$ with $d(w) = \Delta$,  we have 
\begin{equation}\label{eqn-asymptotics delta 1}
\left( \sum_{v \in V(H)} d(v)^{x-1} \right)^{\frac{1}{x}} \to \Delta \qquad \text{as }  x \to \infty.
\end{equation}

To obtain more information, for a fixed real $x \geq 2$ let $ a = a(x,H) \in \mathbb{R}$ be such that
\[
d(v_1)^{x-1} + \cdots + d(v_q)^{x-1} = a^x.
\]
Since $1 \leq d(v) \leq \Delta$ for all $v \in V(H)$, for any $\varepsilon > 0$ we have
\begin{equation} \label{eqn-eqn to analyze}
d(v_1)^{x-1 + \varepsilon} + \cdots + d(v_q)^{x-1+\varepsilon} \leq \Delta^{\varepsilon} \left( d(v_1)^{x-1} + \cdots + d(v_q)^{x-1} \right) = \Delta^{\varepsilon} a^{x},
\end{equation}
with strict inequality if $d(v_i) < \Delta$ for some $i$.  Therefore for any $\varepsilon > 0$ (\ref{eqn-eqn to analyze}) gives
\begin{equation}\label{eqn-decreasing delta 1}
a > \Delta \quad \implies \quad \left( \sum_{v \in V(H)} d(v)^{x-1+\varepsilon} \right)^{\frac{1}{x+\varepsilon}} < \left( \sum_{v \in V(H)} d(v)^{x-1} \right)^{\frac{1}{x}}.
\end{equation}
If $a = \Delta$ and $d(v_i) = \Delta$ for all $i$, then (\ref{eqn-eqn to analyze}) gives $q\Delta^{x-1} = \Delta^{x}$ so $H = K_{q}^{\text{loop}}$.  If $a=\Delta$ and $d(v_i) < \Delta$ for some $i$, then for any $\varepsilon > 0$ (\ref{eqn-eqn to analyze}) gives
\begin{equation} \label{eqn-equal delta 1}
\left( \sum_{v \in V(H)} d(v)^{x-1+\varepsilon} \right)^{\frac{1}{x+\varepsilon}} < \Delta = \left( \sum_{v \in V(H)} d(v)^{x-1} \right)^{\frac{1}{x}}.
\end{equation}
  
Finally, for any $\varepsilon > 0$, if $a < \Delta$ then $\Delta^\varepsilon a^x < \Delta^{x + \varepsilon}$, and so  (\ref{eqn-eqn to analyze}) gives
\begin{equation}\label{eqn-increasing delta 1}
a < \Delta \quad \implies \quad \left( \sum_{v \in V(H)} d(v)^{x-1+\varepsilon} \right)^{\frac{1}{x+\varepsilon}} < \Delta.
\end{equation}

This already provides a substantial amount of information, fully analyzing the graphs $H$ where $\sum_{v \in V(H)} d(v) \geq \Delta^2$ (here we focus on $x=2$ and so the condition on $H$ means $a=a(2,H) \geq \Delta$).  Indeed, for $H \neq K_{q}^{\text{loop}}$, (\ref{eqn-decreasing delta 1}) and (\ref{eqn-equal delta 1}) applied at $x=2$ imply 
\[
\left( \sum_{v \in V(H)} d(v)^{y-1} \right)^{\frac{1}{y}} < \left( \sum_{v \in V(H)} d(v) \right)^{\frac{1}{2}}
\]
for any $y > 2$, which implies
$
\hom(K_{1,x-1},H)^{\frac{1}{x}} < \hom(K_{1,1},H)^{\frac{1}{2}}
$
for any integer $x > 2$.

For the graphs $H$ satisfying $\sum_{v \in V(H)} d(v) < \Delta^2$, we may already obtain a statement for large $n$ (using (\ref{eqn-asymptotics delta 1}) and also (\ref{eqn-increasing delta 1}) at $x=2$), but with an additional argument we can obtain a statement for all $n$.  We need the following lemma, whose proof we give after first using the lemma to complete the proof of Theorem \ref{thm-Strong delta1}. 

\begin{lemma}\label{lem-lp delta 1}
The function $\left(\sum_{v \in V(H)} d(v)^{x-1}\right)^{\frac{1}{x}}$ 
has at most one local maximum or minimum.
\end{lemma}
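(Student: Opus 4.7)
The plan is to take logarithms and exploit convexity. Set $g(x) = \log\sum_{v \in V(H)} d(v)^{x-1}$, so that $\log f(x) = g(x)/x$ where $f$ is the function in the lemma. A key observation is that
\[
g(x) = \log \sum_{v \in V(H)} e^{(x-1)\log d(v)}
\]
is the log-sum-exp of linear functions of $x$, and therefore $g$ is convex on $\mathbb{R}$ (this is a standard fact from convex analysis, following from Hölder's inequality applied to the pair $d(v)^{x-1}$, or equivalently from the fact that the cumulant generating function of any finite distribution is convex).

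Next I would differentiate $\log f$. Writing
\[
(\log f)'(x) = \frac{xg'(x) - g(x)}{x^2},
\]
the critical points of $f$ on $[2,\infty)$ are exactly the zeros of $p(x) := xg'(x) - g(x)$. A direct computation gives
\[
p'(x) = g'(x) + xg''(x) - g'(x) = xg''(x),
\]
which is nonnegative for $x > 0$ by convexity of $g$. Hence $p$ is non-decreasing on $[2,\infty)$, so it can change sign at most once.

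Consequently $(\log f)'$ changes sign at most once, which forces $\log f$, and therefore $f$, to have at most one local extremum. The only subtle case is when $p$ vanishes on an entire subinterval, since then there are infinitely many critical points; but $p \equiv 0$ on an interval forces $g'' \equiv 0$ there, i.e., $g$ is linear, which (by a short calculation using a second differentiation or Cauchy--Schwarz on $\sum d(v)^{x-1}(\log d(v))^2$) forces all $d(v)$ to be equal. In that degenerate case $f(x) = q^{1/x} d^{(x-1)/x}$ is strictly monotone (or constant) on $[2,\infty)$, so it has no isolated local extremum.

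I do not expect a serious obstacle here: the whole argument is essentially the observation that $x \mapsto g(x)/x$ can be analyzed via $p = xg' - g$, whose derivative $xg''$ has a definite sign. The only thing to be careful about is not to overclaim in the all-degrees-equal boundary case, but that case is easy to dispose of separately.
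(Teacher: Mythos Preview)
Your argument is correct. The key step---that $p(x)=xg'(x)-g(x)$ satisfies $p'(x)=xg''(x)\ge 0$ because $g$ is a log-sum-exp of linear functions---is exactly what is needed, and your handling of the degenerate all-equal-degree case is fine.

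The paper's proof reaches the same conclusion by a different packaging. It invokes an $L^p$ interpolation fact (log-convexity of $1/p \mapsto \|g\|_{L^p(\mu)}$ with a suitable measure $\mu$) to deduce that $t \mapsto \log f(1/t)$ is convex, and then observes that composing with the monotone map $x \mapsto 1/x$ preserves the count of local extrema. Underneath, both arguments rest on the same inequality $g''\ge 0$: in the paper's parametrization one gets genuine convexity of $\log f$ in $t=1/x$, whereas you stay in the variable $x$ and instead show the numerator $xg'-g$ of $(\log f)'$ is monotone. Your route is more elementary and self-contained (no external citation needed); the paper's route makes the connection to standard $L^p$ interpolation explicit and gives the slightly stronger structural statement that $\log f$ is convex after the substitution $t=1/x$. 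Either way, the lemma follows, and in fact both arguments show a bit more than stated: the unique local extremum, if present, must be a minimum.
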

If we assume Lemma \ref{lem-lp delta 1}, then (\ref{eqn-asymptotics delta 1}), (\ref{eqn-decreasing delta 1}), (\ref{eqn-equal delta 1}), and (\ref{eqn-increasing delta 1}) show that for $H \neq K_{q}^{\text{loop}}$ the function $\left(\sum_{v \in V(H)} d(v)^{x-1}\right)^{\frac{1}{x}}$ is either decreasing to $\Delta$ on $(2,\infty)$, increasing to $\Delta$ on $(2,\infty)$, or decreasing on $(2,x_0)$ and increasing to $\Delta$ on $(x_0,\infty)$ for some $x_0 > 2$.  See Figure \ref{fig-delta1possibilities} for the possible behaviors of $\left(\sum_{v \in V(H)} d(v)^{x-1}\right)^{\frac{1}{x}}$. So if $H \neq K_{q}^{\text{loop}}$, then (\ref{eqn-delta=1}) shows that Theorem \ref{thm-Strong delta1} holds for any edge-min-critical $G \in \calG(n,1)$. This implies that the upper bounds given in Theorem \ref{thm-Strong delta1} hold for any $G \in \calG(n,1)$.

\begin{center}
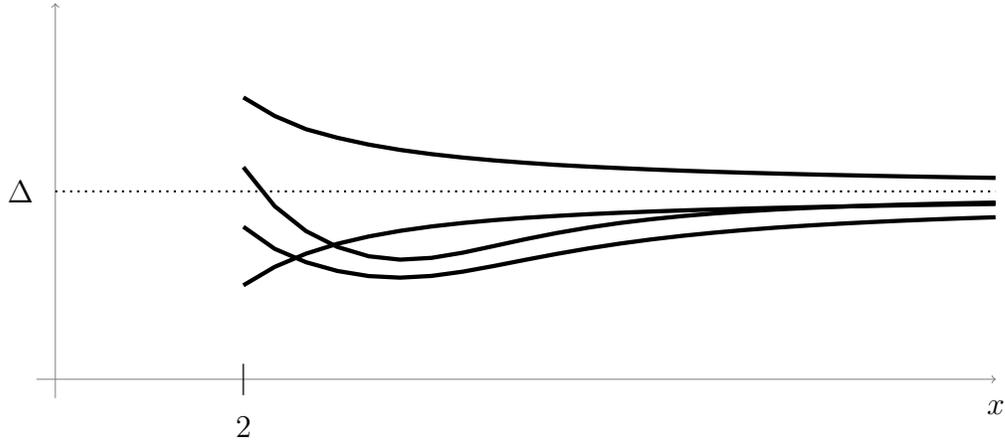
\begin{figure}[ht]

\begin{tikzpicture}[scale=1.25]
\draw [help lines, ->] (-.2,0) -- (10,0);
\draw [help lines, ->] (0,-.2) -- (0,4);
\draw [thick,dotted] (0,2) -- (10,2);
\node at (0,2) [label=180:$\Delta$] {};

\node at (10,0) [label=270:$x$] {};

\node at (2,0) [label=270:$2$] {$|$};

\draw [ultra thick,domain=2:10] plot (\x, {pow(pow(2,\x+1) + 1,(1/\x))});
\draw [ultra thick,domain=2:10] plot (\x, {pow(pow(2,(\x-.05)-2) -2*(\x-.05) + 5.5,(1/\x))});
\draw [ultra thick,domain=2:10] plot (\x, {pow(pow(2,\x-0.8) -4.6*\x + 12,(1/\x))});
\draw [ultra thick,domain=2:10] plot (\x, {pow(pow(2,\x-1) - 1,(1/\x))});

\end{tikzpicture}
\caption{The possible behaviors of the function $\left(\sum_{v \in V(H)} d(v)^{x-1}\right)^{\frac{1}{x}}$ for $H \neq K_{\Delta}^{\text{loop}}$.}
\label{fig-delta1possibilities}
\end{figure}
\end{center}

Finally, we need to argue that for $H\neq K_{q}^{\text{loop}}$, the edge-min-critical graphs in $\calG(n,1)$ which achieve equality are the only possible graphs in $\calG(n,1)$ which achieve equality.  It suffices to consider the addition of a single edge to one of the graphs achieving equality and showing that the number of $H$-colorings decreases in this case.

By considering the neighbors of a vertex $v \in V(H)$ with $d(v) = \Delta$, adding any edge to a disjoint union of stars strictly lowers the number of $H$-colorings unless $H$ contains $K_{\Delta}^{\text{loop}}$ (a slight modification of the argument given in Case 1 of Section \ref{sec-deltageneral} will work, realizing that we need to consider both edges joining vertices in the same component and also edges joining vertices in different components).  If $H$ does contain $K_{\Delta}^{\text{loop}}$ and $H \neq K_{\Delta}^{\text{loop}}$, then $H$ contains some other component and furthermore $H$ clearly satisfies $\sum_{v \in V(H)} d(v) \geq \Delta^2$.  Since $\frac{n}{2} K_2$ is the unique edge-min-critical graph achieving equality for this $H$ and $H$ has at least $2$ components, adding any edge to $\frac{n}{2} K_2$ (which will necessarily join together two components of $\frac{n}{2} K_2$) will lower the number of $H$-colorings in this case as well.  We leave the details to the reader.  This completes the proof of Theorem \ref{thm-Strong delta1}.

\begin{proof}[Proof of Lemma \ref{lem-lp delta 1}:]
\textit{} This lemma is a corollary of the following proposition about $L^p$ norms, which is a special case of Lemma 1.11.5 in \cite{Tao} (or, equivalently, Lemma 2 in Terence Tao's blog post 245C, Notes 1: Interpolation of $L^p$ spaces).  Recall that we assume $H$ has no isolated vertices.   

\begin{proposition}\label{thm-Lp}
Define a measure $\mu$ on $V(H)$ by $\mu(v) = \frac{1}{d(v)}$, and let $g:V(H) \to \mathbb{R}$ be given by $g(v) = d(v)$.  Then the function defined by $\frac{1}{x} \mapsto ||g||_{L^{x}(V(H))} = \left( \sum_{v \in V(H)} d(v)^{x-1} \right)^{\frac{1}{x}}$ 
is log-convex for $x \in (2, \infty)$.
\end{proposition}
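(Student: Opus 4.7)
The plan is to prove log-convexity directly from H\"older's inequality. Fix $x_1, x_2 \in (2, \infty)$ and $\theta \in (0,1)$, and define the interpolated exponent $x$ by $\tfrac{1}{x} = \tfrac{\theta}{x_1} + \tfrac{1-\theta}{x_2}$. The goal is to establish the interpolation inequality
\[
\|g\|_{L^{x}(\mu)} \leq \|g\|_{L^{x_1}(\mu)}^{\theta}\,\|g\|_{L^{x_2}(\mu)}^{1-\theta},
\]
since taking logarithms of both sides gives exactly the convexity of $\tfrac{1}{x} \mapsto \log \|g\|_{L^{x}(\mu)}$ at the point $\tfrac{1}{x} = \theta \cdot \tfrac{1}{x_1} + (1-\theta) \cdot \tfrac{1}{x_2}$.

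The key step is a direct application of H\"older's inequality. Factor $g^{x} = g^{x\theta}\cdot g^{x(1-\theta)}$ and apply H\"older with the exponents $p_1 = x_1/(x\theta)$ and $p_2 = x_2/(x(1-\theta))$. The interpolation identity is precisely what makes these exponents conjugate: multiplying both sides of $1/x = \theta/x_1 + (1-\theta)/x_2$ by $x$ yields $1/p_1 + 1/p_2 = 1$. H\"older then gives
\[
\int g^{x}\, d\mu \;\leq\; \Bigl(\int g^{x_1}\, d\mu\Bigr)^{x\theta/x_1} \Bigl(\int g^{x_2}\, d\mu\Bigr)^{x(1-\theta)/x_2},
\]
and extracting the $x$-th root yields the claimed interpolation bound. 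In our setting, $\mu(v) = 1/d(v)$ and $g(v) = d(v)$ on the finite set $V(H)$, so each integral $\int g^{y}\, d\mu$ collapses to the sum $\sum_{v \in V(H)} d(v)^{y-1}$, and the inequality is precisely the log-convexity statement in the proposition.

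There is essentially no obstacle here: this is Lyapunov's classical interpolation inequality for $L^p$ norms, and the restriction $x \in (2,\infty)$ plays no special role beyond specifying the interval under consideration. The only bookkeeping point is to check that $p_1, p_2 > 1$ so that H\"older applies, which amounts to observing that $x$ lies strictly between $\min(x_1,x_2)$ and $\max(x_1,x_2)$ — a consequence of $1/x$ being a convex combination of $1/x_1$ and $1/x_2$.
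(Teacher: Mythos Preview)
Your proof is correct: this is the standard Lyapunov interpolation argument, and the H\"older step with exponents $p_1=x_1/(x\theta)$ and $p_2=x_2/(x(1-\theta))$ is exactly right. The paper does not give its own proof of this proposition but simply cites it as a special case of Lemma~1.11.5 in Tao's book; your argument is precisely the standard proof of that lemma, so you are just supplying the details the paper defers to the reference.
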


Recall that a log-convex function can has most one local maximum or local minimum.  The composition of the reciprocal map and the map given in Proposition \ref{thm-Lp} is the function defined by $x \mapsto \left( \sum_{v \in V(H)} d(v)^{x-1} \right)^{\frac{1}{x}}$.  Since the reciprocal is strictly monotone and therefore preserves local extremal values, Lemma \ref{lem-lp delta 1} follows. 
\end{proof}

\medskip

Lastly, we prove Theorem \ref{thm-max degree condition}.  Notice that we can still delete any edge from a graph $G \in \calG(n,\delta,D)$ and remain in $\calG(n,\delta,D)$ as long the edge deletion does not lower the minimum degree.  Therefore the proof of Theorem \ref{thm-Strong delta1} also proves the inequality in Theorem \ref{thm-max degree condition} by restricting the function $\left(\sum_{v \in V(H)} d(v)^{x-1} \right)^{\frac{1}{x}}$ to values in $[2,D+1]$.  

Having dealt with the inequality, we need to argue that the edge-min-critical graphs in $\calG(n,1,D)$ which achieve equality are the only possible graphs in $\calG(n,1,D)$ which achieve equality.  This follows from similar arguments to those in the proof of Theorem \ref{thm-Strong delta1} and we again leave the details to the reader.


\section{Proof of Theorem \ref{thm-2 regular graphs}} \label{sec-2regular}


Recall that we will assume that $|V(H)| = q$, and we begin with a few remarks about the number of $H$-colorings of a cycle $C_k$.  Let $A$ denote the adjacency matrix of $H$.  Then for $k \geq 3$, $\hom(C_k,H) = \Tr A^k$; indeed, the diagonal entry $(ii)$ in $A^k$ counts the number of $H$-colorings of the path on $k+1$ vertices $P_{k+1}$ that color both endpoints with color $i$, and by identifying the endpoints we obtain a coloring of $C_k$ with one fixed vertex having color $i$.  Therefore if $\lambda_1, \lambda_2, \ldots, \lambda_q$ are the eigenvalues of $A$, then 
\begin{equation}\label{eqn-H col cycles}
\hom(C_k,H) = \lambda_1^k + \cdots + \lambda_q^k.
\end{equation}
It is possible to obtain results using ideas based on Proposition \ref{thm-Lp} (with some additional observations); we provide an alternate proof.  Without loss of generality, assume that $\lambda_1 \geq \lambda_2 \geq \cdots \geq \lambda_q$.  Notice that $\lambda_1 > 0$ and $\lambda_1 \geq |\lambda_q|$; this follows from the Perron-Frobenius theorem, 
but is also immediate as otherwise (\ref{eqn-H col cycles}) would imply that $\hom(C_k,H) < 0$ for large odd $k$.  

\medskip

We first address the inequality in Theorem \ref{thm-2 regular graphs}, and deal with the cases of equality at the end.  Suppose $k \geq 4$ is even and let $b = b(k,H) \geq \lambda_1$ be such that
\[
\lambda_1^k + \lambda_2^k + \cdots + \lambda_q^k = b^k.
\]
Then 
\[
\lambda_1^{k+2} + \cdots + \lambda_q^{k+2} \leq \lambda_1^2 (\lambda_1^k + \cdots + \lambda_q^k) \leq b^2 (\lambda_1^k + \cdots + \lambda_q^k) = b^{k+2},
\]
with equality only for $b = \lambda_1$, so 
\begin{equation}\label{eqn-cycle even}
(\lambda_1^{k+2} + \cdots + \lambda_q^{k+2})^{\frac{1}{k+2}} \leq (\lambda_1^{k} + \cdots + \lambda_{q}^k)^{\frac{1}{k}},
\end{equation}
which implies that $\hom(C_k,H)^{\frac{1}{k}} \leq \hom(C_4,H)^{\frac{1}{4}}$ for even $k \geq 6$.

Suppose next that $k \geq 5$ is odd, and so as above we have $\lambda_1^{k-1} + \cdots + \lambda_q^{k-1} = b^{k-1}$.  Then
\begin{equation}\label{eqn-cycle odd}
\lambda_1^k + \cdots + \lambda_q^k \leq |\lambda_1|\lambda_1^{k-1} + \cdots + |\lambda_q|\lambda_q^{k-1} \leq \lambda_1 b^{k-1} \leq b^k,
\end{equation}
with equality only for $b = \lambda_1$, which implies that $\hom(C_k,H)^{\frac{1}{k}} \leq \hom(C_{k-1},H)^{\frac{1}{k-1}}$.  

Summarizing the above, the function $\hom(C_k,H)^{\frac{1}{k}}$ is non-increasing from every even $k\geq 4$ to both $k+1$ and $k+2$ and so, for $k \geq 5$, $\hom(C_k,H)^{\frac{1}{k}} \leq \hom(C_4,H)^{\frac{1}{4}}$.  Therefore for all $k \geq 3$,
\[
\hom(C_k,H)^{\frac{1}{k}} \leq \max \{ \hom(C_3,H)^{\frac{1}{3}}, \hom(C_4,H)^{\frac{1}{4}} \}.
\]

Now, if $G$ is any $2$-regular graph, then $G$ is the disjoint union of cycles $C_{k_i}$.  So if $\hom(C_4,H)^{\frac{1}{4}} \geq \hom(C_3,H)^{\frac{1}{3}}$,
\[
\hom(G,H) = \prod_i \hom(C_{k_i},H) \leq \prod_i \hom(C_4,H)^{\frac{k_i}{4}} = \hom(C_4,H)^{\frac{n}{4}},
\]
with a similar statement holding if $\hom(C_3,H)^{\frac{1}{3}} \geq \hom(C_4,H)^{\frac{1}{4}}$.

Finally, we deal with the cases of equality.  There is equality in (\ref{eqn-cycle even}) and (\ref{eqn-cycle odd}) only when $b = \lambda_1$ (and so $\lambda_2 = \cdots = \lambda_q = 0$).  Recall that $A$ is symmetric and so has distinct eigenvectors associated to each $\lambda_i$, so in this case $A$ has rank 1 and therefore all rows of $A$ are scalar multiples of any other row.  If any entry $A_{(ij)} = 0$, then some column and row of $A$ is the $0$ vector, which corresponds to an isolated vertex in $H$.  Since we assume $H$ has no isolated vertices, $A$ must be the matrix of all $1$'s and so $H=K_{q}^{\text{loop}}$.  Therefore, for $H \neq K_{q}^{\text{loop}}$ we have $\hom(C_k,H)^{\frac{1}{k}} < \max\{\hom(C_3,H)^{\frac{1}{3}}, \hom(C_4,H)^{\frac{1}{4}} \}$ whenever $k \geq 5$.  The statement about equality is now evident.


\section{Proof of Theorem \ref{thm-Strong delta2} ($\delta=2$)}\label{sec-delta2}

\subsection{Preliminary remarks}

We first gather together a number of observations that we will use in the proof.  The following lemma is easily adapted from Lemma 3.2 in \cite{EngbersGalvin3}.  Recall than a graph with minimum degree $\delta$ is edge-min-critical if deleting any edge reduces the minimum degree of the resulting graph.

\begin{lemma}\label{lem-delta2structure}
Let $G \in \calG(n,2)$ be an edge-min-critical graph.  Either
\begin{enumerate}
	\item $G$ is a disjoint union of cycles, or
	
	\item $V(G)$ may be partitioned into $Y_1 \cup Y_2$, with $1 \leq |Y_1| \leq n-3$ in such a way that $Y_1$ induces a path, $Y_2$ induces a graph with minimum degree $2$, each endpoint of the path induced by $Y_1$ has exactly one edge to $Y_2$, the endpoints of these two edges to $Y_2$ are either the same or non-adjacent, and there are no other edges from $Y_1$ to $Y_2$ (see Figure \ref{fig-lemma}).
\end{enumerate}
\end{lemma}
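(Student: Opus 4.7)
The crucial observation is that if $G$ is edge-min-critical with $\delta(G)=2$, then every edge of $G$ must have at least one endpoint of degree exactly $2$; otherwise, removing that edge would not lower the minimum degree. Equivalently, the set $V_{\geq 3}(G)$ of vertices of degree at least $3$ is independent in $G$. The plan is to bootstrap this single fact into the promised structure.

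If $V_{\geq 3}(G)$ is empty then $G$ is $2$-regular and hence a disjoint union of cycles, placing us in case~(1). Otherwise I would examine $G[V_2]$, the subgraph induced on the degree-$2$ vertices of $G$. Since its maximum degree is at most $2$, its components are paths, cycles, and isolated vertices. A cycle component of $G[V_2]$ receives no further edges from $V_{\geq 3}$ and is therefore a connected component of $G$ in its own right, so it contributes only to a potential ``disjoint union of cycles'' side and is irrelevant when selecting $Y_1$. In the connected component of $G$ containing a vertex of $V_{\geq 3}$, $G[V_2]$ must then contain either an isolated vertex or a path component, and I would take $Y_1$ to be its vertex set, with $Y_2 = V(G)\setminus Y_1$.

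By construction $Y_1$ induces a path (possibly a single vertex), and since every $Y_1$-vertex has $G$-degree $2$, the only edges from $Y_1$ to $Y_2$ are the two ``loose'' edges at the endpoints of this path, each going to a vertex $v_0$ or $v_1$ of $V_{\geq 3}$ (possibly $v_0 = v_1$). If $v_0\neq v_1$ then they are non-adjacent by independence of $V_{\geq 3}$, matching the condition in~(2). To verify that $G[Y_2]$ has minimum degree $2$: any vertex of $Y_2$ outside $\{v_0,v_1\}$ has no neighbor in $Y_1$ and so retains its $G$-degree $\geq 2$; the vertices $v_0,v_1$ each lose at most one edge when $v_0\neq v_1$, giving $\deg_{Y_2} \geq 3-1 = 2$, whereas when $v_0 = v_1$ the single vertex loses two edges to $Y_1$, so I would need $\deg_G(v_0)\geq 4$.

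Handling the $v_0=v_1$ case is where I expect the main subtlety. Here the chain in $Y_1$ starts and ends at the same $v_0$, forming a ``loop'' at $v_0$. To guarantee $\deg_G(v_0)\geq 4$, I would choose $Y_1$ adaptively: if some chain of degree-$2$ vertices emanating from $v_0$ ends at a different vertex $v_1 \in V_{\geq 3}$ (automatically non-adjacent to $v_0$), I would use that chain and land in the $v_0\neq v_1$ subcase; otherwise every chain out of $v_0$ returns to $v_0$, and since each such loop accounts for exactly two of $v_0$'s neighbors, the degree $\deg_G(v_0)\geq 3$ is forced to be even and hence $\geq 4$, with at least two loops present. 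The size bound $|Y_1|\leq n-3$ then follows by exhibiting three distinct $Y_2$-vertices: $v_0$, $v_1$, and a further neighbor of $v_0$ when $v_0\neq v_1$; and $v_0$ together with at least two internal vertices of a second loop when $v_0=v_1$. Beyond this parity count, the whole argument flows directly from the independence of $V_{\geq 3}$.
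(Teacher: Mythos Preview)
The paper does not actually prove this lemma: it quotes it as an easy adaptation of Lemma~3.2 of \cite{EngbersGalvin3}, noting only that dropping vertex-min-criticality relaxes $|Y_1|\ge 2$ to $|Y_1|\ge 1$ and that the disconnected case follows by applying the connected case to a single component. So there is no in-paper argument to compare against; what you have written is a correct, self-contained proof and is almost certainly the argument underlying the cited lemma as well. The one structural observation that drives everything---that $V_{\ge 3}(G)$ is independent in an edge-min-critical graph with $\delta=2$---is exactly the right starting point, and the rest follows as you describe.

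Two minor clarifications you could make. First, the adaptive step reads more smoothly if you fix a vertex $v_0\in V_{\ge 3}$ at the outset and then select the chain, rather than picking $Y_1$ and possibly revising it; as written the logic is fine but the order of quantifiers is slightly awkward. Second, in the $v_0=v_1$ case you refer to ``internal vertices of a second loop'': what you actually need (and have) is simply at least two \emph{vertices} of a second loop, which exist because a one-vertex loop at $v_0$ would be a multi-edge. With those cosmetic tweaks the argument is complete, and it has the advantage over the paper's treatment of being self-contained.
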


In the statement given in \cite{EngbersGalvin3}, the graph is assumed to be connected and \emph{min-critical}, i.e. both edge-min-critical and \emph{vertex-min-critical}.  Being vertex-min-critical --- meaning the deletion of any vertex reduces the minimum degree --- is only used to show that $|Y_1| \geq 2$; the result holds for all edge-min-critical graphs by relaxing to $|Y_1| \geq 1$.  To obtain the non-connected version, simply apply the connected version to each component.  If some component is not a cycle, then augment the $Y_2$ obtained with the vertices in every other component.

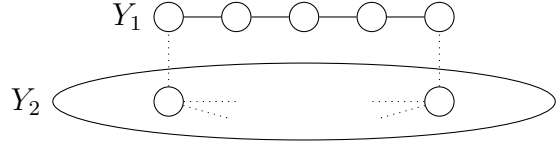
\begin{figure}[ht]
\begin{center}
\begin{tikzpicture}[scale=.9]
	\node (v1) at (1,1) [circle,draw] {};
	\node (v2) at (5,1) [circle,draw] {};
	
	\node [ellipse,draw, fit=(v1) (v2),label=180:$Y_2$] {};
	
	\node (w1) at (1,2.25) [circle,draw,label=180:$Y_1$] {};
	\node (w2) at (2,2.25) [circle,draw] {};
	\node (w3) at (3,2.25) [circle,draw] {};
	\node (w4) at (4,2.25) [circle,draw] {};
	\node (w5) at (5,2.25) [circle,draw] {};
	
	\foreach \from/\to in {w1/w2,w2/w3,w3/w4,w4/w5}
	\draw (\from) -- (\to);
	
	\draw [dotted] (v1) -- (w1);
	\draw [dotted] (v2) -- (w5);
	\draw [dotted] (v1) -- (2,1);
	\draw [dotted] (v1) -- (1.90,.75);
	\draw [dotted] (v2) -- (4,1);
	\draw [dotted] (v2) -- (4.10,.75);

\end{tikzpicture}
\end{center}
\caption{A example of a path on $5$ vertices ($|Y_1|=5$) given in Lemma \ref{lem-delta2structure}.}
\label{fig-lemma}
\end{figure}

\begin{corollary}\label{cor-delta2decomposition}
Let $G \in \calG(n,2)$ be an edge-min-critical graph.  Then $G$ may be constructed via the following iterative procedure:
\begin{itemize}
	\item Start with a non-empty collection of disjoint cycles.
	\item Next, iteratively add a 
	collection paths on $k \geq 2$ vertices which connect to existing vertices of the graph \emph{only} at the endpoints of the path; let $G'$ be the graph obtained after these paths have been added.
	\item Finally, add $n-|V(G')|$ new vertices, each of which is adjacent to exactly two vertices of $G'$.
\end{itemize}
\end{corollary}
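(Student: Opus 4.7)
The plan is to prove the corollary by induction on $|V(G)|$, using Lemma \ref{lem-delta2structure} to peel off one ``piece'' at a time. In the base case, $G$ is itself a disjoint union of cycles, and the decomposition is trivial (no added paths). In the inductive step, Lemma \ref{lem-delta2structure} provides a partition $V(G) = Y_1 \cup Y_2$ in which $G[Y_1]$ is a path on $|Y_1| \geq 1$ vertices attached to $G[Y_2]$ only via two edges at its endpoints. I then decompose $G[Y_2]$ recursively and append the addition of $Y_1$ at the end of that recursive decomposition, classifying it as a ``middle step'' path addition if $|Y_1| \geq 2$, and as a ``final step'' isolated-vertex addition (with two attaching edges to distinct non-adjacent vertices of $Y_2$) if $|Y_1| = 1$; note that when $|Y_1|=1$, the two attaching edges must go to distinct vertices since $G$ is simple, and non-adjacency is provided by the lemma.

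The key preliminary step is verifying that $G[Y_2]$ is itself edge-min-critical of minimum degree 2, so that the inductive hypothesis applies. Fix any edge $e$ of $G[Y_2]$. Since $G$ is edge-min-critical of minimum degree 2, removing $e$ from $G$ creates a vertex of degree 1, which must be an endpoint $v$ of $e$; so $v \in Y_2$ has $G$-degree 2. By Lemma \ref{lem-delta2structure}, each of the two special vertices of $Y_2$ (those receiving an edge from $Y_1$) has $Y_2$-degree at least 2 and hence $G$-degree at least 3, so $v$ is not special. Therefore $v$'s $Y_2$-degree equals its $G$-degree, namely 2, and removing $e$ from $G[Y_2]$ reduces $v$'s degree to 1, establishing edge-min-criticality.

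The final issue is that the naive recursive decomposition may interleave $|Y_1|=1$ steps (isolated vertices) with $|Y_1| \geq 2$ steps (longer paths), whereas the corollary requires all isolated-vertex additions to occur simultaneously at the very end. The saving observation is that any vertex $y$ added as an isolated vertex has $G$-degree exactly 2 at the moment of its addition (its two attaching edges), and these must therefore be all of $y$'s incident edges in $G$; hence no later step of the construction can attach a path endpoint to $y$ without raising its final $G$-degree above 2. It follows that in the recursive order no later path endpoint attaches to any earlier isolated vertex, and no two isolated vertices peeled off at different recursive steps are adjacent in $G$. Thus all isolated-vertex additions can be postponed to a single simultaneous final step without violating any of the attachment requirements, completing the construction. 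I expect this reordering step, together with the degree bookkeeping at the special vertices of $Y_2$, to be the main point of care in the full write-up.
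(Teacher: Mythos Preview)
Your approach is essentially the same as the paper's: use Lemma \ref{lem-delta2structure} recursively to peel $G$ down to a union of cycles, then argue that the one-vertex paths can all be deferred to a single final step. Your verification that $G[Y_2]$ is again edge-min-critical is correct and fills in a detail the paper leaves implicit.

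However, your reordering argument as written is circular. You assert that for an isolated vertex $y$ added at some intermediate step, ``these must therefore be all of $y$'s incident edges in $G$,'' i.e.\ that $y$ has degree $2$ in $G$; but this is precisely what you are trying to establish (that no later path attaches to $y$). Having degree $2$ in the partially reconstructed graph at the moment of addition does not by itself force degree $2$ in the final graph $G$.

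The missing ingredient is the one the paper singles out: when $y$ is added as an isolated vertex, its two attachment points $a,b$ lie in the smaller graph of minimum degree $2$, so $a$ and $b$ each have degree at least $3$ at that moment, and hence degree at least $3$ in $G$ (degrees only grow as the reconstruction proceeds). Now invoke edge-min-criticality of $G$ on the edge $ya$: since $a$ has degree at least $3$ in $G$, deleting $ya$ must drop $y$ to degree $1$, forcing $y$ to have degree exactly $2$ in $G$. This is the step that actually rules out later attachments to $y$ and legitimizes the reordering. You already observed in your preliminary step that special vertices acquire $G$-degree at least $3$; you just need to combine that observation with edge-min-criticality of $G$ itself at this point, rather than only with the degree count at the moment of addition.
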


We emphasize that when we add a path on $k$ vertices to a graph (via the construction in Corollary \ref{cor-delta2decomposition}), we add $k$ \emph{new} vertices to the graph that induce a path in the augmented graph, so that the endpoints of the path are each adjacent to one existing vertex and no other vertices of the path are adjacent to existing vertices.  Figure \ref{fig-lemma} may be viewed as an example of adding a path on $5$ vertices ($Y_1$) to the graph on vertex set $Y_2$.

\begin{proof}
By Lemma \ref{lem-delta2structure}, paths on $k \geq 1$ vertices can be removed inductively until a collection of disjoint cycles remain, and so we may construct any graph $G$ starting with the cycles.  Reversing this, we may iteratively add paths on $k \geq 1$ vertices to produce $G$.  The content of this corollary is that $G$ may be constructed by adding all paths on $k \geq 2$ vertices before the paths on $1$ vertex, and the paths on $1$ vertex may all be added at the same time.

Why is this possible?  Adding a path on $1$ vertex creates a vertex of degree $2$ adjacent to two vertices of degree at least $3$.  Since $G$ is edge-min-critical, no future path on $k$ vertices will be adjacent to the vertex of degree $2$.
\end{proof}

Lemma \ref{lem-delta2structure} is enough to prove the case when $\max\{ \hom(C_3,H)^{\frac{1}{3}}, \hom(C_4,H)^{\frac{1}{4}} \} \geq \Delta$ (without a characterization of uniqueness); we will provide the details of this in the next section.  A graph $H$ which satisfies
\begin{equation}\label{eqn-Hcondition}
\max\{ \hom(C_3,H)^{\frac{1}{3}}, \hom(C_4,H)^{\frac{1}{4}}\} < \Delta
\end{equation}
requires a few more observations.

\begin{lemma}\label{lem-color p4}
For any two vertices $u,v$ of $H$ (not necessarily distinct), there are at most $\Delta^2$ $H$-colorings of $P_4$ that map the initial vertex of the path to $u$ and the terminal vertex to $v$. If $H$ does not contain $K_{\Delta}^{\text{loop}}$ or $K_{\Delta,\Delta}$ as a component, then there are strictly fewer than $\Delta^2$ such $H$-colorings.
 
\end{lemma}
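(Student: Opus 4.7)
\medskip

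\noindent\textbf{Proof proposal.} The plan is to write the colorings of $P_4$ with endpoints mapped to $u,v$ as a sum indexed by the neighbors of $u$, bound each summand by $\Delta$, then analyze equality to force a $K_{\Delta}^{\text{loop}}$ or $K_{\Delta,\Delta}$ component. Label the vertices of $P_4$ in order as $a,x,y,b$. An $H$-coloring with $a \mapsto u$ and $b \mapsto v$ is a choice of $x \in N(u)$ and $y \in N(x) \cap N(v)$, so the number of such colorings is
\[
N(u,v) \;=\; \sum_{x \in N(u)} |N(x) \cap N(v)|.
\]
Since $|N(x)\cap N(v)|\leq |N(x)|\leq \Delta$ and $|N(u)|\leq \Delta$, we get $N(u,v)\leq \Delta^2$, proving the first assertion.

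For the strict inequality, I would examine equality. If $N(u,v)=\Delta^2$, then $|N(u)|=\Delta$, and for every $x\in N(u)$ we have $|N(x)\cap N(v)|=\Delta$, which forces $|N(x)|=|N(v)|=\Delta$ and $N(x)=N(v)$. Set $T:=N(v)$ and $T':=N(u)$ (both of size $\Delta$). Pick any $x\in T'$; then $u\in N(x)=T$, so $u\sim v$. For any $y\in T$, each $x\in T'$ satisfies $y\in N(x)=T$, hence $x\sim y$, giving $T'\subseteq N(y)$ and therefore $N(y)=T'$. Thus every vertex in $T$ has neighborhood $T'$ and every vertex in $T'$ has neighborhood $T$.

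Now I would split on whether $T=T'$ or $T\neq T'$. If $T=T'$, then each vertex of $T$ has neighborhood $T$ (and so carries a loop, since $x\in N(x)$), and $T$ has no edges leaving it, so $T$ is a $K_{\Delta}^{\text{loop}}$ component. If $T\neq T'$, then any common vertex $z\in T\cap T'$ would force $T=N(z)=T'$, a contradiction, so $T$ and $T'$ are disjoint; the induced bipartite structure between them is complete and there are no other edges, so $T\cup T'$ is a $K_{\Delta,\Delta}$ component. In either case $H$ contains $K_{\Delta}^{\text{loop}}$ or $K_{\Delta,\Delta}$ as a component, contradicting the hypothesis. Hence $N(u,v)<\Delta^2$.

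The bound itself is an immediate counting argument; the only substantive step is the structural analysis of equality, and the main thing to be careful about is tracking the neighborhoods of vertices in $T$ (not just those in $T'=N(u)$) to pin down the component precisely, together with the two subcases $T=T'$ versus $T\cap T'=\emptyset$.
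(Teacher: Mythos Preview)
Your proof is correct and follows essentially the same approach as the paper: both write the count as a sum over neighbors of $u$, bound it by $\Delta^2$, and then analyze equality by showing every $x\in N(u)$ has $N(x)=N(v)$, which forces the component containing $u,v$ to be $K_{\Delta}^{\text{loop}}$ or $K_{\Delta,\Delta}$. The only cosmetic difference is the case split: the paper conditions on whether $u$ is looped, while you condition on whether $T=T'$; these are equivalent (since $u\in T$, and $u\in T'$ iff $u$ is looped, while $T\cap T'\neq\emptyset$ forces $T=T'$), and your formulation with the sets $T,T'$ is arguably cleaner.
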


\begin{proof}
The first statement is obvious, since $P_4$ is connected and the maximum degree of $H$ is $\Delta$.  Suppose there are $\Delta^2$ extensions to an $H$-coloring of $P_4$.  Let $P_4$ have vertices $w_1$ through $w_4$ and edges $w_1 \sim w_2$, $w_2 \sim w_3$, and $w_3 \sim w_4$, and let $w_1$ and $w_4$ be given colors $u$ and $v$ in $H$, respectively.  We color $w_2$ first (with color $v_2$) and then $w_3$ (with color $v_3$), conditioning on whether $u$ is looped or not.

Suppose that $u$ is unlooped in $H$.  Clearly $d(u) = \Delta$ and each neighbor of $u$ also has degree $\Delta$.  Since some of the $\Delta^2$ extensions map $w_3$ to $u$, it must be the case that $v \sim_H u$.  Furthermore, as $w_2$ maps to $v_2$ (so necessarily $v_2 \sim_H u$), every neighbor of $v_2$ must be adjacent to $v$.  Since $v_2$ can be any neighbor of $u$, this implies that $K_{\Delta,\Delta}$ is the component of $H$ containing $u$ and $v$.

A similar analysis for looped $u$ shows that $K_{\Delta}^{\text{loop}}$ is the component of $H$ containing $u$ and $v$.
\end{proof}

\begin{corollary}\label{cor-short paths}
Suppose that $H$ satisfies (\ref{eqn-Hcondition}) and let $k \geq 4$. For any two vertices $u,v$ of $H$ (not necessarily distinct), there are strictly fewer than $\Delta^{k-2}$ $H$-colorings of $P_k$ that map the initial vertex of the path to $u$ and the terminal vertex to $v$.
\end{corollary}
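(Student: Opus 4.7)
The plan is to deduce the corollary from Lemma \ref{lem-color p4} by concatenating a short path with a copy of $P_4$ and bounding the two pieces separately. I would proceed in two steps.

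\emph{Step 1 (Excluding the bad components).} First I would verify that under hypothesis (\ref{eqn-Hcondition}), $H$ contains neither $K_\Delta^{\text{loop}}$ nor $K_{\Delta,\Delta}$ as a component. If $K_\Delta^{\text{loop}}$ were a component, then since $\hom(C_k,\cdot)$ is additive over components and $\hom(C_3, K_\Delta^{\text{loop}}) = \Delta^3$, we would get $\hom(C_3,H)^{1/3} \geq \Delta$, contradicting (\ref{eqn-Hcondition}). Similarly, $\hom(C_4, K_{\Delta,\Delta}) = 2\Delta^4$ (any homomorphism from $C_4$ must alternate sides of the bipartition, giving $2 \cdot \Delta^4$ choices), so $K_{\Delta,\Delta}$ as a component would yield $\hom(C_4,H)^{1/4} \geq 2^{1/4}\Delta > \Delta$, again a contradiction. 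Consequently, the strict bound in Lemma \ref{lem-color p4} applies uniformly, and
\[
M := \max_{u,v \in V(H)} \#\{H\text{-colorings of } P_4 \text{ from } u \text{ to } v\} < \Delta^2.
\]

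\emph{Step 2 (Bootstrapping to longer paths).} For $k \geq 4$, label the vertices of $P_k$ as $w_1,\ldots,w_k$ and write $N_k(u,v)$ for the number of $H$-colorings sending $w_1 \mapsto u$ and $w_k \mapsto v$. Condition on the color $z$ of $w_{k-3}$ (interpreted as $z = u$ when $k = 4$) and split $P_k$ into the sub-path on $w_1,\ldots,w_{k-3}$ and the sub-path on $w_{k-3},w_{k-2},w_{k-1},w_k$ (a copy of $P_4$). This yields
\[
N_k(u,v) \;=\; \sum_{z \in V(H)} N_{k-3}(u,z)\, M_4(z,v) \;\leq\; M \sum_{z \in V(H)} N_{k-3}(u,z),
\]
where $M_4(z,v)$ denotes the number of $H$-colorings of $P_4$ from $z$ to $v$. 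The remaining sum counts $H$-colorings of $P_{k-3}$ with $w_1 \mapsto u$ and the other endpoint unconstrained, that is, the number of walks of length $k-4$ in $H$ starting at $u$; this is at most $\Delta^{k-4}$ since $\Delta$ is the maximum degree. Therefore
\[
N_k(u,v) \;\leq\; M \cdot \Delta^{k-4} \;<\; \Delta^2 \cdot \Delta^{k-4} \;=\; \Delta^{k-2},
\]
as required.

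I do not anticipate any genuine obstacle here; the argument is essentially a one-line concatenation once the base case is in hand. The only point that needs a moment's care is Step 1, namely confirming that (\ref{eqn-Hcondition}) rules out both extremal components identified in Lemma \ref{lem-color p4}, which reduces to the explicit counts $\hom(C_3,K_\Delta^{\text{loop}}) = \Delta^3$ and $\hom(C_4, K_{\Delta,\Delta}) = 2\Delta^4$.
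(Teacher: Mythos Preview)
Your proof is correct and follows essentially the same approach as the paper: rule out $K_\Delta^{\text{loop}}$ and $K_{\Delta,\Delta}$ as components of $H$ using (\ref{eqn-Hcondition}), then split $P_k$ into a prefix on $k-3$ vertices (bounded greedily by $\Delta^{k-4}$) and a terminal $P_4$ (strictly less than $\Delta^2$ by Lemma~\ref{lem-color p4}). The only cosmetic difference is that the paper excludes both bad components via $\hom(C_4,\cdot)$, whereas you use $\hom(C_3,\cdot)$ for $K_\Delta^{\text{loop}}$; either works.
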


\begin{proof}
Notice that $\hom(C_4,K_{\Delta,\Delta})^{\frac{1}{4}} > \Delta$ and $\hom(C_4, K_{\Delta}^{\text{loop}})^{\frac{1}{4}} = \Delta$.  Color, beginning from one endpoint, until there are two uncolored vertices left.  Then apply Lemma \ref{lem-color p4}. 
\end{proof}

We can strengthen Corollary \ref{cor-short paths} when $k$ is large.

\begin{lemma}\label{lem-long paths}
Suppose that $H$ satisfies (\ref{eqn-Hcondition}).  Then there exists a constant $l_H$ (depending on $H$) such that if $k \geq l_H$ and the endpoints of $P_k$ are mapped to $H$, then there are strictly fewer than $\frac{1}{|V(H)|^2} \Delta^{k-4}$ extensions to an $H$-coloring of $P_k$.
\end{lemma}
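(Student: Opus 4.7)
My plan is to recast the counting problem in terms of powers of the adjacency matrix of $H$ and then apply a spectral bound. Write $q = |V(H)|$ and let $A$ denote the adjacency matrix of $H$, with real eigenvalues $\lambda_1 \geq \lambda_2 \geq \cdots \geq \lambda_q$ and associated orthonormal eigenbasis $x_1, \ldots, x_q$. As in the preamble to Theorem~\ref{thm-2 regular graphs}, the number of $H$-colorings of $P_k$ sending the endpoints to $u$ and $v$ is the entry $(A^{k-1})_{uv}$, and the spectral decomposition gives
\[
(A^{k-1})_{uv} \;=\; \sum_{i=1}^q \lambda_i^{k-1} (x_i)_u (x_i)_v.
\]

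The first key step is to establish that $\lambda_1 < \Delta$. By (\ref{eqn-H col cycles}), $\hom(C_4,H) = \sum_i \lambda_i^4 \geq \lambda_1^4$, so the hypothesis (\ref{eqn-Hcondition}) forces $\lambda_1 \leq \hom(C_4,H)^{1/4} < \Delta$. The second step is to control the coefficients in the spectral sum. Using Perron--Frobenius to obtain $|\lambda_i| \leq \lambda_1$ for all $i$, and Cauchy--Schwarz applied to the orthogonal matrix with columns $x_i$ (whose rows are orthonormal, so $\sum_i (x_i)_u^2 = 1$ for every $u$), I get
\[
(A^{k-1})_{uv} \;\leq\; \lambda_1^{k-1} \sum_{i=1}^q |(x_i)_u|\,|(x_i)_v| \;\leq\; \lambda_1^{k-1}.
\]
Setting $\rho := \lambda_1/\Delta \in (0,1)$, this rewrites as $(A^{k-1})_{uv} \leq \lambda_1^{3} \rho^{k-4} \Delta^{k-4}$. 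Since $\rho^{k-4} \to 0$, I can choose $l_H$ so that $\lambda_1^3 \rho^{l_H - 4} < 1/q^2$, for instance $l_H := 4 + \lceil \log(q^2 \lambda_1^3) / \log(1/\rho) \rceil$ (taking $l_H = 4$ if $q^2 \lambda_1^3 \leq 1$); for all $k \geq l_H$ this gives $(A^{k-1})_{uv} < \Delta^{k-4}/q^2$, as required.

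The main obstacle is simply producing the strict inequality $\lambda_1 < \Delta$; once the strict hypothesis in (\ref{eqn-Hcondition}) is unpacked spectrally, everything else is routine because the exponential decay of $\rho^{k-4}$ overwhelms the constants $q^2$ and $\lambda_1^3$. It is worth noting that only the $C_4$ half of (\ref{eqn-Hcondition}) is used in this argument; the $C_3$ condition plays no role here.
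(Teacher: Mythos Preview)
Your argument is correct and follows the same spectral route as the paper: express the count as an entry of a power of the adjacency matrix, show $\lambda_1 < \Delta$ from the $C_4$ half of~(\ref{eqn-Hcondition}), and then bound matrix entries by a constant times $\lambda_1^{k-1}$ to get exponential decay against $\Delta^{k}$. The only difference lies in how that entrywise bound is obtained. The paper restricts to the connected component hit by the path and invokes the strictly positive Perron eigenvector $\mathbf{x}$ there to get $\max_{i,j}(A^{k})_{ij} \leq (\max_j x_j/\min_j x_j)\,\lambda_1^{k}$; you instead use the full orthonormal eigenbasis of the symmetric matrix $A$ together with Cauchy--Schwarz to obtain the cleaner, component-free bound $(A^{k-1})_{uv} \leq \lambda_1^{k-1}$. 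Your version avoids the need to pass to a component (and hence the appeal to strict positivity of the Perron vector) and yields an explicit constant, at the modest cost of invoking the spectral theorem; both arrive at the same conclusion with the same dependence on $k$.
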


\begin{proof}
Notice that a path must be mapped to a connected component of $H$; focus on that component.  If $A$ is the adjacency matrix of that component, then the number of $H$-colorings of $P_k$ with endpoints colored $i$ and $j$ is $A^{k}_{(ij)}$.  If $\lambda_1$ denotes the largest eigenvalue of $A$, then by the Perron-Frobenius Theorem 
there exists a strictly positive vector $\textbf{x}$ such that $A^k \textbf{x} = \lambda_1^k \textbf{x}$ for all $k\geq 1$.  By considering the row of $A$ containing $\max_{i,j} A^k_{(ij)}$, we see that there is a constant $c$ such that $\max_{i,j} A^k_{(ij)} \leq c \lambda_1^k$ (we can take $c = \max_j x_j / \min_j x_j$, where $\textbf{x} = (x_j)$).  Since $\lambda_1 \leq \left( \sum_i \lambda_i^4 \right)^{\frac{1}{4}} = \hom(C_4,H)^{\frac{1}{4}} < \Delta$ implies $\lambda_1 < \Delta$, this proves the lemma.

\end{proof}

\subsection{The proof}

We are now ready to prove Theorem \ref{thm-Strong delta2}. 
We assume that $G$ is edge-min-critical until we discuss the cases of equality in the upper bound.  First, suppose that $H$ satisfies 
\begin{equation}\label{eqn-delta2Hcondition1}
\max \{\hom(C_3,H)^{\frac{1}{3}}, \hom(C_4,H)^{\frac{1}{4}} \} \geq \Delta.
\end{equation}
Using induction on $n$, we will show that for any $G \in \calG(n,2)$,
\[
\hom(G,H) \leq \max \{ \hom(C_3,H)^{\frac{n}{3}}, \hom(C_4,H)^{\frac{n}{4}} \}.
\]
The base case $n=3$ is trivial.

For the inductive step, assume first that $\hom(C_3,H)^{\frac{1}{3}} \leq \hom(C_4,H)^{\frac{1}{4}}$.  If all components of $G$ are cycles, then we are finished by Theorem \ref{thm-2 regular graphs}.  If some component of $G$ is not a cycle, then by Lemma \ref{lem-delta2structure} we can partition $V(G)$ into $Y_1 \cup Y_2$, with $1 \leq |Y_1| \leq n-3$ and $Y_1$ connected to $Y_2$.  We imagine first coloring $Y_2$ and then extending this to $Y_1$.  By induction, there are at most $\hom(C_4,H)^{\frac{n-|Y_1|}{4}}$ $H$-colorings of $Y_2$.  But since $Y_1$ is connected to $Y_2$, for every fixed $H$-coloring of $Y_2$, each vertex in $Y_1$ has at most $\Delta$ choices for a color.  Therefore,
\begin{equation}\label{eqn-C4 wins}
\hom(G,H) \leq \Delta^{|Y_1|} \hom(C_4,H)^{\frac{n-|Y_1|}{4}} \leq \hom(C_4,H)^{\frac{n}{4}}.
\end{equation}
The case when $\hom(C_3,H)^{\frac{1}{3}} \geq \hom(C_4,H)^{\frac{1}{4}}$ is similar.  

With the upper bound established in this case, we turn to the cases of equality.  First suppose that $H$ satisfies $\max \{ \hom(C_3,H)^{\frac{1}{3}}, \hom(C_4,H)^{\frac{1}{4}}\} > \Delta$.  Then (\ref{eqn-C4 wins}) is strict, which implies that equality can only be obtained for the disjoint union of cycles and hence Theorem \ref{thm-2 regular graphs} provides the cases of equality among edge-min-critical graphs.  

Now suppose $\max \{ \hom(C_3,H)^{\frac{1}{3}}, \hom(C_4,H)^{\frac{1}{4}} \} = \Delta$ (which implies that $H$ cannot have $K_{\Delta,\Delta}$ as a component).  Notice that equality is achieved for any $G$ with $\hom(G,H) = \Delta^n$, and suppose that $H \neq K_{\Delta}^{\text{loop}}$ (so since $\hom(C_4,H)^{\frac{1}{4}} \leq \Delta$, $H$ cannot contain $K_{\Delta}^{\text{loop}}$ as a component).  By Theorem \ref{thm-2 regular graphs}, the construction of $G$ in Corollary \ref{cor-delta2decomposition} must start with disjoint copies of $C_3$ and/or $C_4$.  Corollary \ref{cor-short paths} implies that only paths on $1$ vertex may be added to these cycles, and to achieve the bound of $\Delta^n$, \emph{every} coloring of these cycles must provide $\Delta$ choices for the color of the vertex in the path on $1$ vertex.  We outline the possible situations which occur when adding a path on $1$ vertex to the cycles in Figure \ref{fig-annoyingcases}; the vertex labeled $v$ must have $\Delta$ choices for a color regardless of how the adjacent cycles are colored.

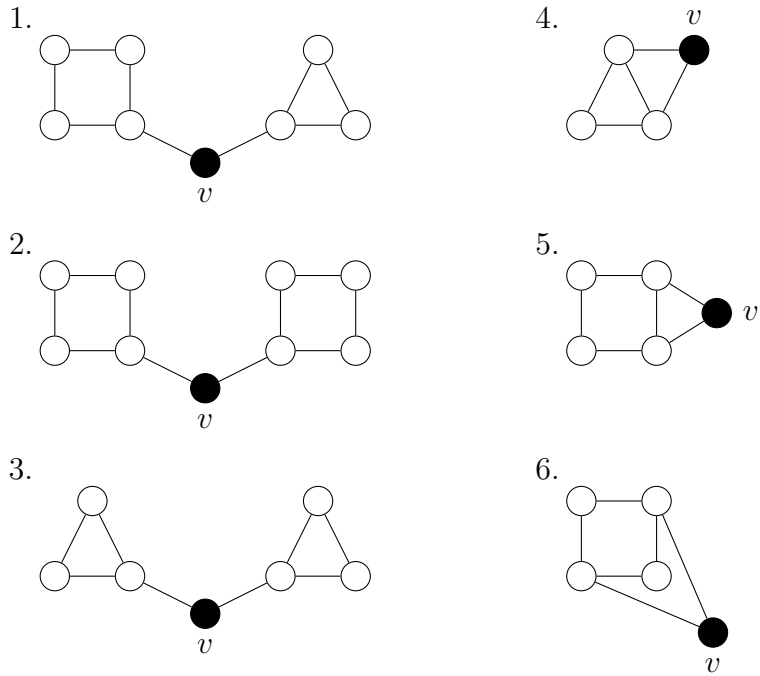
\begin{figure}[ht] 
\begin{center}
\begin{tikzpicture}
	\node (a1) at (10,10) [circle,draw] {};
	\node (a2) at (10,11) [circle,draw,label=135:$1.$] {};
	\node (a3) at (11,10) [circle,draw] {};
	\node (a4) at (11,11) [circle,draw] {};
	\node (a5) at (13,10) [circle,draw] {};
	\node (a6) at (14,10) [circle,draw] {};
	\node (a7) at (13.5,11) [circle,draw] {};
	\node (v1) at (12,9.5) [circle,draw,fill,label=270:$v$] {};
	
	\foreach \from/\to in {a1/a2,a2/a4,a4/a3,a3/a1,a5/a6,a6/a7,a7/a5,v1/a3,v1/a5}
	\draw (\from) -- (\to);
	
	\node (b1) at (10,7) [circle,draw] {};
	\node (b2) at (10,8) [circle,draw,label=135:$2.$] {};
	\node (b3) at (11,7) [circle,draw] {};
	\node (b4) at (11,8) [circle,draw] {};	
	\node (b5) at (13,7) [circle,draw] {};
	\node (b6) at (13,8) [circle,draw] {};
	\node (b7) at (14,7) [circle,draw] {};
	\node (b8) at (14,8) [circle,draw] {};
	\node (v2) at (12,6.5) [circle,draw,fill,label=270:$v$] {};
	
	\foreach \from/\to in {b1/b2,b2/b4,b4/b3,b3/b1,b5/b6,b6/b8,b8/b7,b7/b5,v2/b3,v2/b5}
	\draw (\from) -- (\to);	
	
	\node (c1) at (10,4) [circle,draw] {};
	\node (c2) at (11,4) [circle,draw] {};
	\node at (10,5) [label=135:$3.$] {};
	\node (c3) at (10.5,5) [circle,draw] {};
	\node (c4) at (13,4) [circle,draw] {};
	\node (c5) at (14,4) [circle,draw] {};
	\node (c6) at (13.5,5) [circle,draw] {};
	\node (v3) at (12,3.5) [circle,draw,fill,label=270:$v$] {};
	
	\foreach \from/\to in {c1/c2,c2/c3,c3/c1,c4/c5,c5/c6,c6/c4,v3/c2,v3/c4}
	\draw (\from) -- (\to);	
	
	\node (d1) at (17,10) [circle,draw] {};
	\node at (17,11) [label=135:$4.$] {};
	\node (d2) at (18,10) [circle,draw] {};
	\node (d3) at (17.5,11) [circle,draw] {};
	\node (v4) at (18.5,11) [circle,draw,fill,label=90:$v$] {};
	
	\foreach \from/\to in {d1/d2,d2/d3,d3/d1,v4/d2,v4/d3}
	\draw (\from) -- (\to);
	
	\node (e1) at (17,7) [circle,draw] {};
	\node (e2) at (17,8) [circle,draw,label=135:$5.$] {};
	\node (e3) at (18,8) [circle,draw] {};
	\node (e4) at (18,7) [circle,draw] {};
	\node (v5) at (18.8,7.5) [circle,draw,fill,label=0:$v$] {};
	
	\foreach \from/\to in {e1/e2,e2/e3,e3/e4,e4/e1,v5/e3,v5/e4}
	\draw (\from) -- (\to);
	
	\node (f1) at (17,4) [circle,draw] {};
	\node (f2) at (17,5) [circle,draw,,label=135:$6.$] {};
	\node (f3) at (18,5) [circle,draw] {};
	\node (f4) at (18,4) [circle,draw] {};
	\node (v6) at (18.75,3.25) [circle,draw,fill,label=270:$v$] {};
	
	\foreach \from/\to in {f1/f2,f2/f3,f3/f4,f4/f1,v6/f3,v6/f1}
	\draw (\from) -- (\to);
	
\end{tikzpicture}

\caption{The possible situations which occur when a path on $1$ vertex (labeled $v$) is added to the cycles.}

\end{center}
\label{fig-annoyingcases}
\end{figure}

We will prove that in the situation of Case 5 from Figure \ref{fig-annoyingcases}, having $\Delta$ choices for the color on $v$ for each coloring of $C_4$ forces $H$ to contain $K_{\Delta}^{\text{loop}}$ as a component.  Suppose that the neighbors of $v$ have colors $i$ and $j$.  We can assume that $i \neq j$, since $C_4$ can map its partition classes to the endpoints of any edge in $H$.  Since $v$ has $\Delta$ possibilities for its color, necessarily $i$ and $j$ must each have $\Delta$ neighbors and furthermore those $\Delta$ neighbors must be simultaneously neighbors of both $i$ and $j$.  In particular, since $i$ and $j$ are on adjacent vertices of $C_4$, we have $i \sim_H j$ and so $i$ and $j$ must be possible colors for $v$.  This means that $i$ and $j$ must be looped and that all neighbors of $i$ are also neighbors of $j$.  But if $k$ is any other neighbor of $i$, then a similar argument (replacing $j$ by $k$) shows that $k$ is looped and is adjacent to all other neighbors of $i$.  Therefore the component containing $i$ is $K_{\Delta}^{\text{loop}}$, which contradicts our assumption that $K_{\Delta}^{\text{loop}}$ is not a component of $H$. A routine analysis of the five remaining cases from Figure \ref{fig-annoyingcases} shows that having $\Delta$ choices for the color of the vertex in a path on $1$ vertex always forces $H$ to contain $K_{\Delta}^{\text{loop}}$ as a component; we leave the details of these remaining cases to the reader.  Therefore, equality can only occur when $G$ is a disjoint union of cycles, so Theorem \ref{thm-2 regular graphs} again characterizes the cases of equality among edge-min-critical graphs.

Finally we need to show that edge-min-critical graphs are the only graphs achieving equality.  Arguing as in Case 1 of Section \ref{sec-deltageneral}, adding any edge to a $C_4$ will strictly lower the number of $H$-colorings unless $H$ contains $K_{\Delta}^{\text{loop}}$.  The cases of adding an edge between two disjoint cycles are similar, and so adding any edge to a graph achieving equality will strictly lower the number of $H$-colorings unless $H$ is the disjoint union of some number of fully looped complete graphs.  If $H$ is of this form and $H \neq K_{\Delta}^{\text{loop}}$, then $\hom(C_3,H)^{\frac{1}{3}} > \hom(C_4,H)^{\frac{1}{4}}$, and so in fact adding any edge to $\frac{n}{3} C_3$ will strictly lower the number of $H$-colorings since the disjoint copies of $C_3$ can be colored using different components of $H$, but the copies of $C_3$ joined by an edge must all be colored by a single component of $H$.  



\medskip
Now suppose that $H$ satisfies
\begin{equation}\label{eqn-delta2HCondition2}
\max \{ \hom(C_3,H)^{\frac{1}{3}}, \hom(C_4,H)^{\frac{1}{4}} \} < \Delta.
\end{equation}
Recall that from Theorem \ref{thm-2 regular graphs} we have
\[
\hom(C_k,H)^{1/k} \leq \max \{\hom(C_3,H)^{1/3}, \hom(C_4,H)^{1/4} \},
\]
for all $k \geq 3$, which we bound (for simplicity) by
\begin{equation}\label{eqn-bound on cycles}
\hom(C_k,H) \leq (\Delta^4 - 1)^{k/4} \qquad \text{for } k\geq 3.
\end{equation}
As in the proof of Theorem \ref{thm-ExtremalHColorings}, we will let $S(2,H)$ denotes the vectors in $V(H)^2$  with the property that the elements of the vector have $\Delta$ common neighbors, and $s(2,H) = |S(2,H)|$.  Notice that $\hom(K_{2,n-2}) \geq s(2,H) \Delta^{n-2}$.

\medskip

Suppose first that $G$ is edge-min-critical.  We will utilize the construction of $G$ from Corollary \ref{cor-delta2decomposition} to produce all $H$-colorings of $G$ by coloring the disjoint cycles first and then coloring the paths.

If there are more than $t$ vertices in the disjoint cycles, then by (\ref{eqn-bound on cycles}) we have $\hom(G,H) \leq (\Delta^4-1)^{t/4} \Delta^{n-t}$.  Therefore, we may assume that there are at most $c_1$ vertices in disjoint cycles.  (All constants in the remainder of this proof will depend on $H$ but will be independent of $n$.)

After coloring the cycles, we look at the paths that are added iteratively.  If any path has length longer than some constant $l$, then by Lemma \ref{lem-long paths} we have $\hom(G,H) < \Delta^{n-2}$.  Since a path of length $k$, for $2 \leq k \leq l$, has at most $\Delta^k-1$ extensions to an $H$-coloring by Corollary \ref{cor-short paths}, if there are at least $c_2$ such paths then 
\[
\hom(G,H) < \left(\prod_{i=1}^{c_2} (\Delta^{k_i} -1) \right) \Delta^{n-\sum_{i=1}^{c_2} k_i} < \Delta^{n-2}.
\]

So, we may assume that the decomposition of $G$ from Corollary \ref{cor-delta2decomposition} has fewer than $c_3$ vertices in either disjoint cycles or paths on $k$ vertices, with $2 \leq k \leq l$, and no paths on $k$ vertices, with $k > l$.  Therefore, the decomposition has at least $n-c_3$ vertices in paths on $1$ vertex.  Furthermore, each path on $1$ vertex must be attached to two of the at most $c_3$ vertices composing the disjoint cycles and the paths on at least $2$ vertices that are added.  By the pigeonhole principle there exists a $c_4 > 0$ and two vertices in $G$ with at least $c_4 n$ paths on $1$ vertex joining them.

\medskip 

We have shown that every edge-min-critical graph $G$ which does not have two vertices with at least $c_4 n$ paths on $1$ vertex joining them has $\hom(G,H) < \Delta^{n-2}$, and by Lemma \ref{lem-long paths} we have the same bound on $\hom(G,H)$ if $G$ has a path on $k$ vertices when $k > l$.  We now deal with the remaining edge-min-critical graphs $G$.

Let $w_1$ and $w_2$ denote the vertices in $G$ joined by at least $c_4 n$ paths on $1$ vertex.  Recall from Section \ref{sec-deltageneral} that $S(2,H)$ is the set of vectors in $V(H)^2$  with the property that the elements of the vector have $\Delta$ common neighbors, and $s(2,H) = |S(2,H)|$.  Suppose first that the colors on $w_1$ and $w_2$ are an element of $S(2,H)$.  If $G$ is different from $K_{2,n-2}$, then $w_1$, $w_2$, and the at least $c_4 n$ paths on $1$ vertex between them do not form all of $G$.  But then $G$ must contain either a cycle which does not include $w_1$ or $w_2$, or a path on $k$ vertices (for $2 \leq k \leq l_H$) from $w_i$ to $w_j$ for some $i,j \in \{1,2\}$.  By first coloring $w_1$ and $w_2$, then any remaining disjoint cycles, and finally the remaining vertices, Corollary \ref{cor-short paths} and (\ref{eqn-bound on cycles}) imply that there exists a $c_5 < 1$ such that there are at most
\begin{equation}\label{eqn-delta2 bound4}
s(2,H) c_5 \Delta^{n-2}
\end{equation}
$H$-colorings of $G$ of this type.

Now suppose that the colors on $w_1$ and $w_2$ are not an element of $S(2,H)$.  By first coloring $w_1$ and $w_2$, then any remaining disjoint cycles, and finally the remaining vertices, we have at most
\begin{equation}\label{eqn-delta2 bound1}
|V(H)|^2 \Delta^{n-c_4 n-2} (\Delta-1)^{c_4 n}
\end{equation}
$H$-colorings of $G$ of this type.

Combining (\ref{eqn-delta2 bound4}) and (\ref{eqn-delta2 bound1}) gives
\[
\hom(G,H) \leq |V(H)|^2 \Delta^{n-2} \left(\frac{\Delta-1}{\Delta}\right)^{c_4 n} + s(2,H) c_5 \Delta^{n-2} < s(2,H) \Delta^{n-2},
\]
with the last inequality holding for large enough $n$.  

We have shown that the only edge-min-critical graph which achieves equality is $K_{2,n-2}$. Arguing as in Case 1 of Section \ref{sec-deltageneral} shows that adding any edges to $K_{2,n-2}$ produces a graph $G$ with $\hom(G,H) < \hom(K_{2,n-2},H)$, which completes the proof. 


\section{Concluding Remarks}\label{sec-concludingremarks}

We now briefly describe how to generalize the results in this paper to weighted $H$-colorings.  Suppose that $\Lambda = \{\lambda_v : v \in V(H) \}$ is a set of positive weights indexed by $V(H)$.  Let 
\[
d_\Lambda(v) = \sum_{w \sim_H v} \lambda_w
\]
(noting that a loop on $v$ will include one $\lambda_v$ term in the sum) and 
\[
\Delta_\Lambda = \max_{v \in V(H)} d_\Lambda(v).
\]
Furthermore, for $i,j \in V(H)$, let $A_\Lambda$ be the matrix with $ij$ entry $\sqrt{\lambda_i \lambda_j}\textbf{1}_{i \sim_H j}$.  Then the results of Theorems \ref{thm-2 regular graphs}, \ref{thm-Strong delta1}, \ref{thm-Strong delta2}, and \ref{thm-ExtremalHColorings} hold for weighted $H$-colorings by changing $d(v)$ to $d_\Lambda(v)$, $\Delta$ to $\Delta_\Lambda$, $A$ to $A_\Lambda$, and $\hom(G,H)$ to $Z_\Lambda(G,H)$.  For example, the condition $\sum_{v \in V(H)} d(v) < \Delta^2$ is replaced by $\sum_{v \in V(H)} \lambda_v d_\Lambda(v) < (\Delta_\Lambda)^2$.  The generalization is straightforward and we leave the details to the interested reader.

\medskip

The proofs of Theorems \ref{thm-Strong delta1} and \ref{thm-Strong delta2} depend heavily on analyzing the structure of edge-min-critical graphs.  For $\delta\geq 3$, there is no helpful structural characterization of these graphs.  A nice answer to the following question might help answer Question \ref{conj-H Colorings Min Degree} for other values of $\delta$. 

\begin{question}
What can be said about the structure of edge-min-critical graphs in $\calG(n,\delta)$ for $\delta \geq 3$?
\end{question}

In light of Theorem \ref{thm-max degree condition}, it would be interesting to consider a maximum degree condition $D$ in addition to a minimal degree condition $\delta$ when $\delta > 1$.  Again writing $\calG(n,\delta,D)$ for the set of graphs with minimum degree $\delta$ and maximum degree at most $D$, if $D < n-\delta$ then $K_{\delta,n-\delta} \notin \calG(n,\delta,D)$.  It is possible that, as in the $\delta=1$ case, the graph $K_{\delta,n-\delta}$ is replaced by $\frac{n}{\delta+D} K_{\delta,D}$ in $\calG(n,\delta,D).$ 
This leads to the following natural question. 

\begin{question}
For which fixed $\delta$, $D$, $H$, and $n$ is it true that for any $G \in \calG(n,\delta,D)$, 
\[
\hom(G,H) \leq \max \{ \hom(K_{\delta+1},H)^{\frac{n}{\delta+1}}, \hom(K_{\delta,\delta},H)^{\frac{n}{2\delta}}, \hom(K_{\delta,D},H)^{\frac{n}{\delta + D}} \}?
\]
\end{question}



%

\medskip

\noindent \textbf{Acknowledgement:} The author is grateful to Igor Rivin for suggesting the use of Proposition \ref{thm-Lp}, to David Galvin for a number of helpful discussions, and to the anonymous referees for several helpful suggestions.



\begin{thebibliography}{99}

\bibitem{AlexanderCutlerMink}
J. Alexander, J. Cutler, and T. Mink, Independent sets in graphs with given minimum degree, {\em Electron. J. Combin.} {\bf 19(3)} (2012), \#P37.

\bibitem{AlexanderMink}
J. Alexander and T. Mink, A new method for enumerating independent sets of a fixed size in general graphs, arXiv:1308.3242.

\bibitem{CutlerRadcliffe2}
J. Cutler and A.J. Radcliffe, Extremal graphs for homomorphisms, {\em J. Graph Theory} {\bf 67} (2011), 261-284.

\bibitem{CutlerRadcliffe3}
J. Cutler and A.J. Radcliffe, Extremal graphs for homomorphisms II, {\em J. Graph Theory} {\bf 76} (2014), 42-59.

\bibitem{CutlerRadcliffe4}
J. Cutler and A.J. Radcliffe, The maximum number of complete graphs in a graph with given maximum degree, {\em J. Combin. Theory Ser. B} {\bf 104} (2014), 60-71.

\bibitem{EngbersGalvin1}
J. Engbers and D. Galvin, $H$-coloring bipartite graphs, {\em J. Combin. Theory Ser. B} {\bf 102} (2012), 726-742.

\bibitem{EngbersGalvin3}
J. Engbers and D. Galvin, Counting independent sets of a fixed size in graphs with given minimum degree, {\em J. Graph Theory} {\bf 76} (2014), 149-168.

\bibitem{GalvinTwoProblems}
D. Galvin, Two problems on independent sets in graphs, {\em Discrete Math.} {\bf 311} (2011), 2105-2112.

\bibitem{GalvinHColoringRegularGraphs}
D. Galvin, Maximizing $H$-colorings of regular graphs, {\em J. Graph Theory} {\bf 73} (2013), 66-84.

\bibitem{GalvinProperColoringRegularGraphs}
D. Galvin, Counting colorings of a regular graph, to appear in {\em Graphs Combin.}, DOI 10.1007/s00373-013-1403-z.

\bibitem{GalvinSernau}
D. Galvin, personal communication.

\bibitem{GalvinMartinelliRamananTetali}
D. Galvin, F. Martinelli, K. Ramanan, and P. Tetali, The multi-state hard core model on a regular tree, {\em SIAM J. Discrete Math} {\bf 25} (2011), 894-916.

\bibitem{GalvinTetali}
D. Galvin and P. Tetali, On weighted graph homomorphisms, {\em Graphs, Morphisms, and Statistical Physics, DIMACS Ser. in Discrete Math. Theoret. Comput. Sci.} {\bf 63} (2004), 97-104.

\bibitem{GalvinSmallDegree}
D. Galvin and Y. Zhao, The number of independent sets in a graph with small maximum degree, {\em Graphs Combin.} {\bf 27} (2011), 177-186.

\bibitem{GanLohSudakov}
W. Gan, P.-S. Loh, and B. Sudakov, Maximizing the number of independent sets of a fixed size, arXiv:1311.4147.

\bibitem{Kahn1}
J. Kahn, An entropy approach to the hard-core model on bipartite graphs, {\em Combin. Probab. and Comput.} {\bf 10} (2001), 219-237.

\bibitem{Kahn3}
J. Kahn, Entropy, independent sets and antichains: a new approach to Dedekind's problem, {\em Proc. Amer. Math. Soc.} {\bf 130(2)} (2002), 371-378.

\bibitem{LawMcDiarmid}
H.-F. Law and C. McDiarmid, On independent sets in graphs with given minimum degree, {\em Combin. Probab. and Comput.} {\bf 22} (2013), 874-884.

\bibitem{Lazebnik}
F. Lazebnik, On the greatest number of $2$ and $3$ colorings of a $(v,e)$-graph, {\em J. Graph Theory} {\bf 13(2)} (1989), 203-214.

\bibitem{Linial}
N. Linial, Legal coloring of graphs, {\em Combinatorica} {\bf 6(1)} (1986), 49-54.

\bibitem{LohPikhurkoSudakov}
P.-S. Loh, O. Pikhurko, and B. Sudakov, Maximizing the number of $q$-colorings, {\em Proc. Lon. Math. Soc.} {\bf 101} (2010), 655-696.

\bibitem{MitraRamananSengupta}
K. Ramanan, A. Sengupta, I. Ziedins, and P. Mitra, Markov random field models of multicasting in tree networks, {\em Adv. in Appl. Probab.} {\bf 34} (2002), 58-84.

\bibitem{Tao}
T. Tao, An epsilon of room, I: real analysis: pages from year three of a mathematical blog, Graduate studies in mathematics, vol 117, AMS, Providence, RI, 2009.

\bibitem{Wilf}
H. Wilf, Backtrack: an $O(1)$ expected time algorithm for the graph coloring problem, {\em Inform. Process. Lett.} {\bf 18(3)} (1984), 119-121.

\bibitem{Zhao}
Y. Zhao, The number of independent sets in a regular graph, {\em Combin. Probab. and Comput.} {\bf 19} (2010), 315-320.

\bibitem{Zhao2}
Y. Zhao, The bipartite swapping trick on graph homomorphisms, {\em SIAM J. Discrete Math} {\bf 25} (2011), 660-680.

\end{thebibliography}
\end{document}